\newtheorem{thm}{Theorem}[section]
\newtheorem{conj}[thm]{Conjecture}
\newtheorem{defn}[thm]{Definition}
\newtheorem{lem}[thm]{Lemma}
\newtheorem{prop}[thm]{Proposition}
\newcommand{\RR}{\mathbb{R}}
\newcommand{\TP}{\mathbb{TP}}
\begin{document}

\title{Tropical hyperplane arrangements and oriented matroids}
\author{Federico Ardila and Mike Develin}
\address{Federico Ardila, San Francisco State University 1600 Holloway Ave., San Francisco, CA, USA\\ Mike Develin, American Institute of Mathematics, 360 Portage 
Ave., Palo Alto, CA, USA}
\date{\today}
\email{federico@math.sfsu.edu, develin@post.harvard.edu}

\begin{abstract}
We study the combinatorial properties of a tropical hyperplane arrangement. We define tropical oriented matroids, and prove that they share many of the properties of ordinary oriented matroids.
We show that a tropical oriented matroid determines a subdivision of a product of two simplices, and conjecture that this correspondence is a bijection. 
\end{abstract}

\maketitle

\section{Introduction}

Tropical mathematics is the study of the tropical semiring consisting of the real numbers with the operations of $+$ and $max$. This semiring can be thought of as the image of a power series ring under the 
degree map which sends a power series to its leading exponent.
This semiring has received great attention recently in several areas of mathematics, due to the discovery that there are often strong relationships between a classical question and its tropical counterpart. 
One can then translate geometric questions about algebraic varieties into combinatorial questions about polyhedral fans. 
This point of view has been fruitful in algebraic geometry, combinatorics, and phylogenetics, among others
 \cite{AK, ARW, D, DS, M, RST, SS}.

The triangulations of a
product of two simplices are ubiquitous and useful objects.
They are of
independent interest \cite{Babson, Bayer, Gelfand, Postnikov}, and have been
used as a building block for finding efficient triangulations of
high dimensional cubes \cite{Haiman, Orden} and disconnected
flip-graphs \cite{Santosflips, Santostoric}. They also arise very
naturally in connection with the Schubert calculus \cite{AB},
Hom-complexes \cite{Pfeifle}, growth series of root lattices \cite{Seashore}, transportation problems and Segre embeddings \cite{Sturmfels}, among others.

The goal of this paper is to start laying down the foundations of a theory of tropical oriented matroids. In the same way that oriented matroids capture the combinatorial properties of real hyperplane arrangements and ordinary polytopes, these objects are modeled after tropical hyperplane arrangements and tropical polytopes. 
We present strong evidence of the intimate connection between them and the subdivisions of a product of two simplices: a tropical oriented matroid determines a subdivision, and we conjecture that this is a bijection. 
We expect that further development of the theory will lead to elegant structural results and 
applications in the numerous areas where these objects appear; we present several results and conjectures to that effect.

\medskip

The paper is organized as follows. In Section \ref{sec:defs} we recall some background information on tropical geometry. Section \ref{sec:tropoms} defines tropical oriented matroids, and proves that every tropical hyperplane arrangement gives rise to one. In Section \ref{sec:properties} we prove that a tropical oriented matroid is completely determined by its topes (maximal elements), and is also determined by its vertices (minimal elements). We define the notions of deletion and contraction. Section \ref{sec:conjectures} presents three key conjectures for tropical oriented matroids: a bijection with subdivisions of a product of two simplices, a notion of duality,
and a topological representation theorem. We then show the potential applications to two open problems in the literature. Finally, Section \ref{sec:2d} shows that a tropical oriented matroid does determine such a subdivision, and prove the reverse direction for triangulations in the two-dimensional case.



\section{Basic definitions}\label{sec:defs}
In this section we recall some basic definitions from tropical geometry. For more
information, see \cite{DS, SS}.

\begin{defn}
The \textbf{tropical semiring} is given by the real numbers $\RR$ together with the 
operations of tropical addition $\oplus$ and tropical multiplication $\odot$ 
defined by $a\oplus b = \text{max}(a, b)$ and $a\odot b = a + b$.
\end{defn}

This tropical semiring can be 
thought of as the image of ordinary arithmetic in a power series ring under the 
degree map, which sends a power series in $t^{-1}$ to its leading exponent. 
As in ordinary geometry, 
we can form tropical $d$-space, $\RR^d$ with the operations of vector addition 
(coordinatewise maximum) and scalar multiplication (adding a constant to each 
vector.) For many purposes, it proves convenient to work in tropical projective 
$(d-1)$-space $\TP^{d-1}$, given by modding out by tropical scalar multiplication; 
this produces the ordinary vector space quotient $\RR^d / (1, \ldots, 1)\RR$, which 
can be depicted as real $(d-1)$-space.

In this space, one important class of objects is tropical hyperplanes. These are
given by the vanishing locus of a single linear functional $\bigoplus c_i\odot 
x_i$; in tropical mathematics, this vanishing locus is defined to be the set of 
points where the encoded maximum $\text{max}(c_1+x_1,\ldots, c_d+x_d)$ is 
achieved at least twice. (Reflection on the power series etymology of tropical 
mathematics will yield the motivation for this definition.)

These tropical hyperplanes are given by fans polar to the simplex formed by the 
standard basis vectors $\{e_1, \ldots, e_d\}$; the apex of $\bigoplus c_i \odot 
x_i$ is $(-c_1, \ldots, -c_d)$. Each of these fans has a natural index on 
each of its cones: the subset of $[d]$ for which $c_i + x_i$ is maximized. On the 
$d$ full-dimensional sectors, this is a singleton; on the cone polar to a subset of 
basis vectors forming a face of the simplex, it is given by that subset of 
coordinates. Figure \ref{types} shows what tropical hyperplanes look
like in $\TP^2$, where the point $(a,b,c)$ in $\TP^2$ is represented by the 
point $(0,b-a,c-a)$ in $\RR^2$.

Another natural class of geometric objects in tropical mathematics is that of 
tropical polytopes.

\begin{defn}
Given a set of points $V=\{v_1,\ldots,v_n\} \subset \TP^{d-1}$, their \textbf{tropical convex 
hull} is 
the set of all (tropical) linear combinations\footnote{Note that we take 
{\bf all} linear combinations, not just the nonnegative ones adding up to $1$ as in regular convexity.} $\bigoplus c_i \odot v_i$ with $c_i \in \RR$, where the scalar multiplication $c_i \odot v_i$ is defined componentwise.
A \textbf{tropical polytope} is the 
tropical convex hull of a finite set of points. 
\end{defn}

Tropical polytopes are bounded polyhedral complexes~\cite{DS}. An important theorem 
connects tropical polytopes with tropical hyperplane arrangements:

\begin{thm}\cite{DS}\label{TP=THA}
Let $P$ be the tropical convex hull of a finite point set $V = \{v_1, \ldots, 
v_n\}$. Then $P$ is the union of the bounded regions of the polyhedral 
decomposition of $\TP^{d-1}$ given by putting an inverted hyperplane at each point 
$v_1, \ldots, v_n$.
\end{thm}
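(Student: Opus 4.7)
The plan is to introduce the combinatorial invariant that governs both sides of the equality: the \emph{type} of a point with respect to the arrangement. For each $x\in\TP^{d-1}$ and each $i\in[n]$, let $S_i(x) = \{j\in[d]: x_j-v_{ij} = \min_k(x_k-v_{ik})\}$; geometrically, $S_i(x)$ is the sector of the inverted tropical hyperplane at $v_i$ containing $x$. The cells of the induced polyhedral decomposition of $\TP^{d-1}$ are precisely the fibers of the combined map $x\mapsto(S_1(x),\ldots,S_n(x))$. With this language, the theorem reduces to proving two claims with the same hypothesis: (a) $x$ lies in the tropical convex hull of $V$ if and only if $\bigcup_i S_i(x) = [d]$, and (b) the cell containing $x$ is bounded if and only if $\bigcup_i S_i(x)=[d]$.

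For (a), suppose first that $x = \bigoplus_i c_i\odot v_i$, so that $x_j = \max_i(c_i+v_{ij})$ for each $j$. Then $x_j - v_{ij}\geq c_i$ for all $i,j$, with equality when $i$ attains the max at coordinate $j$; hence for any $j$, picking an $i$ that attains $x_j$ gives $j\in S_i(x)$, proving that the types cover $[d]$. Conversely, if the types cover $[d]$, set $c_i = \min_k(x_k - v_{ik})$. By construction $c_i+v_{ij}\leq x_j$ for all $i,j$, so $\bigoplus_i c_i\odot v_i \leq x$ coordinatewise; and for each $j$, any $i$ with $j\in S_i(x)$ yields $c_i+v_{ij}=x_j$, giving equality. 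Hence $x\in \text{tconv}(V)$.

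For (b), the key observation is that types are invariant under adding a common constant to the coordinates of $x$ (this is just the passage to $\TP^{d-1}$), but increasing a single coordinate $x_j$ strictly eventually removes $j$ from every $S_i(x)$, while decreasing $x_j$ strictly eventually forces $j$ into every $S_i(x)$. Thus a cell is bounded in $\TP^{d-1}$ precisely when no coordinate can be freely pushed in either direction while preserving all the $S_i$'s, which translates exactly to the condition that each $j\in[d]$ belongs to some $S_i(x)$ but is not in all of them simultaneously in a trivial way; the latter trivial case is excluded because cells with all $S_i(x)=[d]$ collapse to a single point in $\TP^{d-1}$. Combining (a) and (b) identifies $\text{tconv}(V)$ with the union of bounded cells.

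The main obstacle I expect is the careful boundedness analysis in step (b): the equivalence between combinatorial coverage of $[d]$ and geometric boundedness in $\TP^{d-1}$ must account for the mod-$(1,\ldots,1)\mathbb{R}$ quotient, and one must verify that any unbounded direction in an unbounded cell produces a missing coordinate, not merely that a missing coordinate permits escape. Formalizing this via an explicit unbounded ray in each uncovered cell, and conversely showing any ray in a covered cell eventually crosses some hyperplane, is where the proof requires the most bookkeeping.
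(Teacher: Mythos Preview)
The paper does not prove this theorem; it is quoted from \cite{DS} without proof. So there is no in-paper argument to compare against. Your approach via types is precisely the one used in the original Develin--Sturmfels paper, and part (a) is correct as written.

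Your acknowledged weak spot in (b) is a genuine gap, though, and your diagnosis of it is slightly off. The issue is not bookkeeping but a missing idea: showing that ``no single coordinate can be pushed freely'' is strictly weaker than boundedness in $\TP^{d-1}$, since an unbounded ray could move several coordinates at once. The clean way to close the gap is this. Suppose the cell is unbounded, so there is a direction $w\in\RR^d\setminus\RR(1,\dots,1)$ with $S_i(x+tw)=S_i(x)$ for all $t\ge 0$ and all $i$. Preserving the type forces $w$ to be constant on each $S_i(x)$, and for $j\in S_i(x)$, $k\notin S_i(x)$ the inequality $(x_j+tw_j)-v_{ij}<(x_k+tw_k)-v_{ik}$ for all $t\ge 0$ forces $w_j\le w_k$. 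Hence every $S_i(x)$ lies inside $\arg\min_{[d]} w$. If $\bigcup_i S_i(x)=[d]$ this makes $w$ constant, a contradiction; so covered cells are bounded. Conversely, if some $j$ is missing from every $S_i(x)$ then $x+te_j$ stays in the cell for all $t\ge 0$, giving unboundedness. With this argument in place your proof is complete and matches \cite{DS}.
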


This inverted hyperplane arrangement is of course combinatorially equivalent to a 
hyperplane arrangement given by hyperplanes with apexes $\{-v_1, \ldots,-v_n\}$. 
Indeed, both of these are given by the regular subdivisions of a product of simplices; see \cite{Sturmfels, Ziegler} for more information on
this topic.

\begin{thm}~\cite{DS}
The convex hull of a finite point set $V = \{v_1, \ldots, v_n\}$, with the polyhedral subdivision given by 
Theorem~\ref{TP=THA} is combinatorially isomorphic to the complex of interior faces of the regular subdivision of 
$\Delta^{n-1}\times \Delta^{d-1}$, where the height of vertex $(i, j)$ is given by the $j$-th coordinate of $v_i$. The 
corresponding tropical hyperplane arrangement is isomorphic to the complex of 
faces which contain at least 
one vertex from each of the $n$ copies of $\Delta^{d-1}$.
\end{thm}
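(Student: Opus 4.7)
The plan is to construct the bijection explicitly via the \emph{type} of a point. Given $x \in \TP^{d-1}$, for each inverted hyperplane at apex $v_i$ record the type $S_i(x) = \arg\min_k (x_k - v_{i,k}) \subseteq [d]$, and assemble these into a bipartite graph $G(x) \subseteq [n] \times [d]$ with an edge $(i,j)$ whenever $j \in S_i(x)$. By definition of the arrangement's cells, two points lie in the same cell iff they have the same graph $G$, so $G$ descends to a well-defined labeling of cells. By construction, every $i \in [n]$ is covered by at least one edge of $G(x)$, since each $S_i(x)$ is nonempty.

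Next I would show that $G(x)$ is exactly the dual face in the regular subdivision of $\Delta^{n-1} \times \Delta^{d-1}$ with heights $h(i,j) = v_{i,j}$. Recall that a subset $F \subseteq [n] \times [d]$ is a face of this subdivision iff there is an affine functional $\ell(i,j) = a_i + b_j$ on the vertices with $\ell \leq h$ everywhere and equality exactly on $F$. Given $x$, set $b_j = -x_j$ and $a_i = \min_j(v_{i,j} - b_j) = \min_j(v_{i,j} + x_j)$; then the equality set $\{(i,j) : a_i + b_j = v_{i,j}\}$ is precisely $G(x)$. Conversely, any face $F$ realized by $(a,b)$ yields a cell containing the point $x = -b$. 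This produces the required bijection on the level of labels.

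It remains to match the two cellular structures: inclusions and dimensions. Incidences reverse: perturbing $x$ within a cell so that fewer equalities $x_k - v_{i,k} = x_j - v_{i,j}$ hold produces a subgraph of $G(x)$, corresponding to an enlargement of the cell. A count of independent linear equalities shows that $\dim(\text{cell of }x) = d-1-(|V(G)|-c(G))$, where $c(G)$ denotes the number of connected components of $G$, matching the dimension of the dual face in the subdivision. Finally, to separate the full hyperplane arrangement from the tropical polytope: if some $j_0 \in [d]$ has no edge in $G(x)$, then decreasing $x_{j_0}$ does not disturb any $S_i$, producing an unbounded ray; conversely, if every $j$ is covered, the cell is pinned in every coordinate direction and is bounded. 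Thus bounded cells correspond to \emph{interior} faces (covering both $[n]$ and $[d]$), while the full arrangement corresponds to faces covering $[n]$, i.e., containing a vertex from each of the $n$ copies of $\Delta^{d-1}$.

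The main obstacle will be the dimension/incidence verification in the third step: one must show uniformly that the rank of $G(x)$ controls the codimension of the cell, which is delicate when $G$ is disconnected. The boundedness argument, while intuitive, also needs care to rule out hidden bounded directions when all vertices are covered; the cleanest approach is probably to exhibit explicit defining inequalities of each cell from the graph $G$ and observe when they force boundedness.
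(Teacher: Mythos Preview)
This theorem is not proved in the paper at all: it is quoted verbatim from Develin--Sturmfels \cite{DS} and stated without proof, as the citation marker after the theorem number indicates. So there is no ``paper's own proof'' to compare your proposal against.

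That said, your outline is essentially the argument given in \cite{DS} itself: encode each cell of the arrangement by its type, reinterpret the type as a subgraph of $K_{n,d}$, and identify that subgraph with a face of the regular subdivision via the supporting affine functional $a_i + b_j$. The incidence-reversal and the characterization of bounded cells by the condition that $G(x)$ covers all of $[d]$ are also exactly how the original paper proceeds. So your plan is correct in spirit and not a genuinely different route.

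One small technical slip: with $S_i(x) = \arg\min_k(x_k - v_{i,k})$ and $b_j = -x_j$, $a_i = \min_j(v_{i,j}+x_j)$, the equality set $\{(i,j): a_i+b_j = v_{i,j}\}$ is $\{(i,j): j \in \arg\min_k(v_{i,k}+x_k)\}$, not $\arg\min_k(x_k - v_{i,k})$. You have a sign mismatch between your definition of $S_i$ and your choice of $(a,b)$; one of them needs to be flipped (or the height should be $-v_{i,j}$). This is a convention issue rather than a structural gap, and once corrected the rest of your sketch goes through. Your self-identified ``main obstacle'' about the dimension count is handled in \cite{DS} exactly as you suggest, by reading off the defining equalities of the cell from the edges of $G(x)$ and counting independent ones; the rank of the bipartite graph does the job cleanly.
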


Indeed, we can say precisely 
which face each region of the hyperplane arrangement corresponds to; see the 
discussion of Section \ref{sec:conjectures}.

The combinatorial structure of a tropical hyperplane arrangement is captured by its collection of \textbf{types}. Given an arrangement $H_1,\ldots,H_n$ in $\TP^{d-1}$, the \textbf{type} of a point $x\in \TP^{d-1}$ is the $n$-tuple $(A_1, \ldots, A_n)$, where $A_i \subseteq [d]$ is the set of closed sectors of the hyperplane $H_i$ which $x$ is contained in.\footnote{Notice that this definition of type is the transpose of the definition in \cite{DS}.} Algebraically, if hyperplane $H_i$ has vertex $v_i=(v_{i1},\ldots,v_{id})$, the set $A_i$ consists of the indices $j$ for which $x_j-v_{ij}$ is maximal.

Since all the points in a face of the arrangement have the same type, we call this the type of the face. Figure \ref{types} shows an arrangement of three tropical hyperplane arrangements in $\TP^2$, and indicates the types of some of the faces. The labelling of the sectors of the hyperplanes is indicated on the right.

\begin{figure}[h]
\begin{center}\includegraphics[height=8cm]{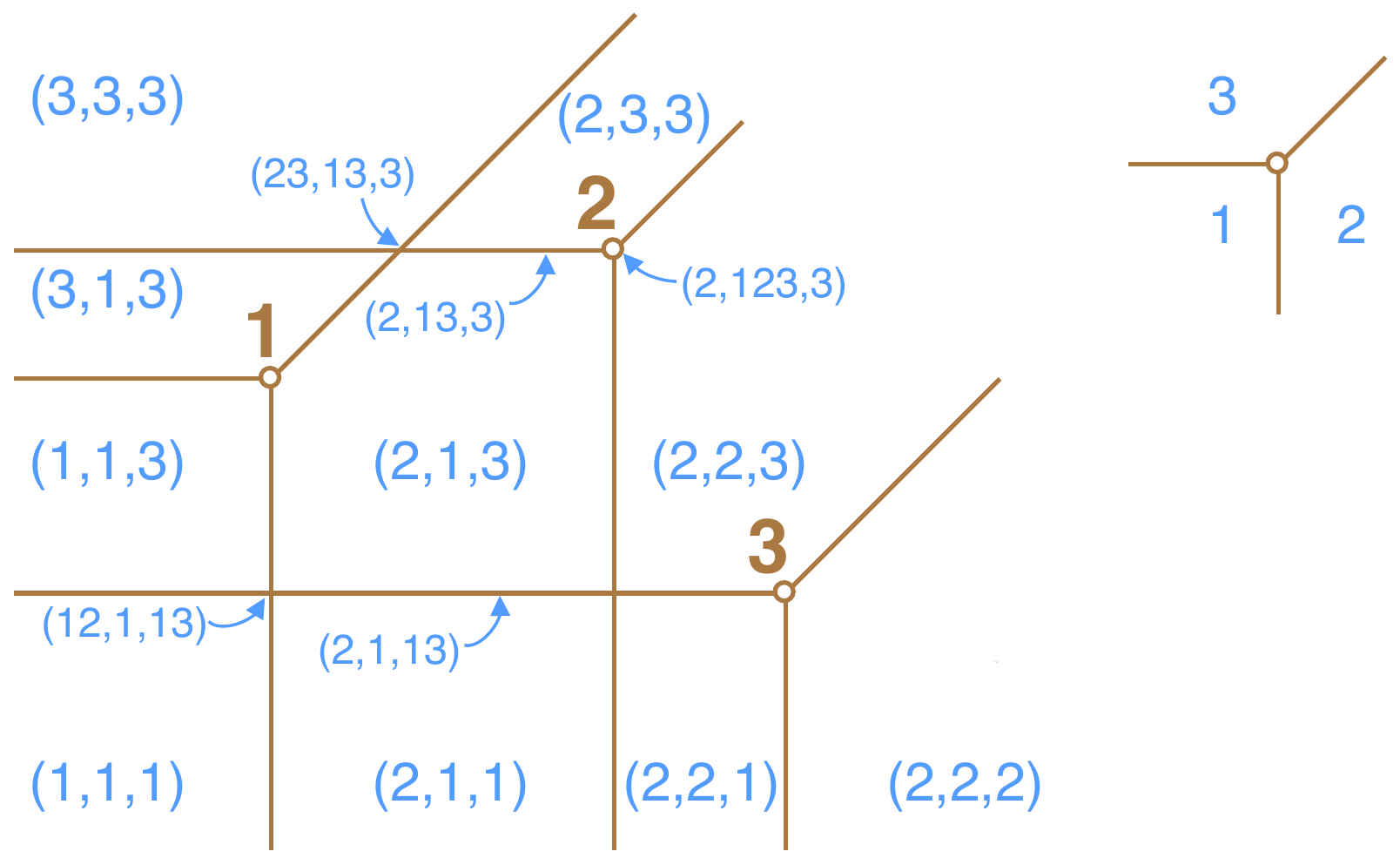}\end{center}
\caption{\label{types} An arrangement in $\TP^2$ and the types of some of its faces.}
\end{figure}

\section{Tropical oriented matroids.}\label{sec:tropoms}





Our goal will be to define tropical oriented matroids as a collection of types
(analogous to the covectors of an ordinary oriented matroid) which satisfy  
certain axioms inspired by tropical hyperplane arrangements as well as by ordinary 
oriented matroids. We proceed with some preliminary definitions.

\begin{defn}
An \textbf{$(n, d)$-type} is an $n$-tuple $(A_1,\ldots, A_n)$ of nonempty subsets of $[d]:=\{1,\ldots,d\}$.
\end{defn}

\begin{defn}
Given two $(n,d)$-types $A$ and $B$, the \textbf{comparability graph} $CG_{A, 
B}$ has vertex set $[d]$. For $1\leq i \leq n$, we draw an edge between $j$ and $k$ for each $j \in A_i$ and $k \in B_i$. That edge is undirected if $j,k \in A_i \cap B_i$, and it is directed $j \rightarrow k$ otherwise.
\end{defn}

This object is a \textbf{semidigraph}:

\begin{defn}
A \textbf{semidigraph} is a graph with some undirected edges and some 
directed edges. A \textbf{directed path} from $a$ to $b$ in a semidigraph 
is a collection of vertices $v_0 = a, v_1, \ldots, v_k = b$ and a 
collection of edges $e_1, \ldots, e_k$, at least one of which is directed, such that $e_i$ is either a 
directed edge from $v_{i-1}$ to $v_i$ or an undirected edge connecting the 
two. A \textbf{directed cycle} is 
a directed path with identical endpoints. A semidigraph is 
\textbf{acyclic} if it has no directed cycles.
\end{defn}

A property that a point lying in a tropical hyperplane arrangement should have is that its type should locally change in a predictable way. The 
next definition will help us rigorize this.

\begin{defn}

The \textbf{refinement} of a type $A = (A_1, \ldots, A_n)$ with respect to an ordered partition $(P_1, \ldots, P_r)$ of $[d]$ is $A = (A_1 \cap P_{m(1)}, \ldots, A_n \cap P_{m(n)})$
where $m(i)$ is the largest index for which $A_i \cap P_{m(i)}$ is non-empty.
A refinement is \textbf{total} if 
each $B_i$ is a singleton.
\end{defn}


With these definitions, we are ready to give a natural axiomatic definition of a tropical oriented matroid.

\begin{defn}
A \textbf{tropical oriented matroid} $M$ (with parameters $(n, d)$) is a collection of 
$(n, d)$-types which satisfy the following four axioms:
\begin{itemize}
\item Boundary: For each $j\in [d]$, the type $\textbf{j} := (j,j, \ldots,j)$ is in $M$.

\item Elimination: If we have two types $A$ and $B$ in $M$ and a position $j\in 
[n]$, then there exists a type $C$ in $M$ with $C_j = A_j\cup B_j$, and $C_k \in \{A_k, B_k, A_k\cup B_k\}$ for all $k\in [n]$.

\item Comparability: The comparability graph $CG_{A,B}$ of any two types $A$ and $B$ in $M$ is acyclic.

\item Surrounding: If $A$ is a type in $M$, then any refinement of $A$ is also in $M$.

\end{itemize}
\end{defn}

The boundary axiom comes from the fact that tropical hyperplanes are all translates of each other; if we go off to infinity in a given basis direction, then we end up in that sector of each hyperplane. 

The elimination axiom roughly tells us how to go from a more general to a more special position with respect to a hyperplane. It predicts what happens when we intersect the $j$th hyperplane as we walk from $A$ to $B$ along a tropical line segment.
For ordinary oriented matroids, if we have two covectors which have opposite signs in some coordinate, then we can produce a covector which has a 0 in that coordinate and agrees with $A$ and $B$ whenever these two agree. Our elimination axiom is just the $d$-sign 
version of this; if we replace the set $[d]$ with the signs $\{+. -\}$, using the 2-element set $\{+, -\}$ for 0, then the elimination axiom yields 
the normal oriented matroid elimination axiom. Note that a type element $A_i$ being larger corresponds to it being more like 0; the ultimate vanishing point 
in the hyperplane is its apex $v_i$, where $A_i = [d]$.

An edge from $j$ to $k$ in the comparability graph $CG_{A,B}$ roughly indicates that, as you walk from region $A$ to region $B$, you are moving more in the $k$ direction than in the $j$ direction. Therefore this graph cannot have cycles.

The composition axiom tells us how to go from a more special to a more general position. In ordinary oriented matroids, for any two covectors $A$ and $B$, we can find a covector which agrees with $A$ everywhere except where the sign of $A$ is 0, when it agrees with the sign of $B$. This corresponds to moving infinitesimally from $A$ 
towards $B$; essentially, doing this process for all $B$ yields local information about signs around $A$, where $B$-covectors are proxies for 
directions. Because tropical hyperplanes are all translates of each other, we don't need any directional information from $B$; refining a type 
corresponds to moving infinitesimally away from the point in a direction indicated by an ordered partition. In other words, we can write down the types 
of points in a local neighborhood of $A$ simply by looking at $A$ itself, which is what the surrounding axiom does.

Obviously, if this definition is to have any merit, the following must be true.

\begin{thm}\label{thm:arr.is.om}
The collection of types in a tropical hyperplane arrangement forms a tropical oriented matroid.
\end{thm}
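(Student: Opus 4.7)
The plan is to verify the four axioms directly, by exhibiting for each an explicit point of $\TP^{d-1}$ whose type realizes the required element of $M$. Throughout, I let $H_i$ denote the hyperplane with apex $v_i = (v_{i1}, \ldots, v_{id})$, so the type of $x$ at $H_i$ is the set of $\ell$ maximizing $x_\ell - v_{i\ell}$.

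Boundary is immediate: I would take $x = N e_j$ with $N$ large, so that $x_j - v_{ij}$ strictly dominates every other $x_\ell - v_{i\ell}$ at each hyperplane, giving type $\textbf{j}$. For Surrounding, given a point $x$ of type $A$ and an ordered partition $(P_1, \ldots, P_r)$ of $[d]$, I would perturb to $x' := x + \epsilon w$, where $w_\ell := m$ for $\ell \in P_m$ and $\epsilon > 0$ is small. The leading term of $x'_\ell - v_{i\ell}$ still forces maximizers to lie in $A_i$, while the tiebreaker $\epsilon w_\ell$ selects precisely $A_i \cap P_{m(i)}$, producing the desired refinement.

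For Comparability, I would let $A$ and $B$ come from points $x$ and $y$ and consider any edge of $CG_{A,B}$ arising from $j \in A_i$, $k \in B_i$. Combining the maximality inequalities $x_j - v_{ij} \geq x_k - v_{ik}$ and $y_k - v_{ik} \geq y_j - v_{ij}$ yields $x_j - y_j \geq x_k - y_k$, with equality if and only if both $j, k \in A_i \cap B_i$, i.e., precisely when the edge is undirected. A directed cycle would then chain these inequalities into the strict contradiction $x_{j_0} - y_{j_0} > x_{j_0} - y_{j_0}$.

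Elimination is the main obstacle, and my approach is to produce $C$ as the type of a suitable point on the tropical line segment from $x$ to $y$. Setting $z(t)_\ell := \max(x_\ell + t, y_\ell)$, $M_A^{(i)} := \max_\ell(x_\ell - v_{i\ell})$, and $M_B^{(i)} := \max_\ell(y_\ell - v_{i\ell})$, I get $\max_\ell(z(t)_\ell - v_{i\ell}) = \max(M_A^{(i)} + t, M_B^{(i)})$, with set of maximizers equal to $A_i$, $B_i$, or $A_i \cup B_i$ according as $M_A^{(i)} + t$ is greater than, less than, or equal to $M_B^{(i)}$. Choosing $t^* := M_B^{(j)} - M_A^{(j)}$ balances the two contributions at position $j$, so the type of $z(t^*)$ is $A_j \cup B_j$ at position $j$ and one of $A_k, B_k, A_k \cup B_k$ at every other position $k$, exactly as the axiom demands.
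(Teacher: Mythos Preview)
Your proof is correct and follows essentially the same approach as the paper. In particular, your elimination argument via $z(t)_\ell = \max(x_\ell + t, y_\ell)$ with $t^* = M_B^{(j)} - M_A^{(j)}$ is exactly the paper's construction of normalizing $x$ and $y$ so their maxima at $H_j$ agree and then taking $z = x \oplus y$; the paper even remarks that this amounts to picking the right point on the tropical line segment from $x$ to $y$. Your treatments of boundary, surrounding, and comparability are likewise identical in substance to the paper's, with only cosmetic differences in presentation.
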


\begin{proof}
Let the apexes of the arrangement be given by $\{v_1, \ldots, v_n\}$. We verify the axioms in the order given above.

\textit{Boundary}: Taking a point with $x_j$ large enough (specifically, such that $x_j-x_i > v_{kj} - v_{ki}$ for 
all $k\in [n]$ and $i\neq j\in [d]$) works.

\textit{Elimination}: Suppose we have points $x$ and $y$ of types $A$ and $B$, and a position $j\in [n]$. 
Let $a \in A_j$ and $b \in B_j$, and pick coordinates for $x$ and $y$ (by adding a multiple of $(1, 
\ldots, 1)$) such that $x_a - v_{ja} = y_b - v_{jb} = 0$. Consider the point $z = x\oplus y$, the coordinatewise maximum of $x$ and $y$. 
Then $x_i - v_{ji} = 0$ for $i\in A_j$, and this maximizes this difference over all $i$; similarly, $y_i - v_{ji} = 0$ for $i\in B_j$, and 
this maximizes this difference over all $i$. Therefore $z_i - v_{ji}$ is $0$ for $i\in A_j \cup B_j$, and it is negative for other $i$. Let $C$ be the type of $z$; then $C_j = A_j\cup B_j$. 

Now, consider any $k\neq j$. We have $z_i - v_{ki} = \text{max} (x_i - v_{ki}, y_i - v_{ki})$; we need to find the values of $i$ for which this is maximized. 
The maximum value of this is equal to $\text{max} (\text{max}_i (x_i - v_{ki}), \text{max}_i (y_i - v_{ki}))$. If the first maximum is bigger, then 
the values of $i$ which maximize $z_i - v_{ki}$ are precisely those which maximize $x_i - v_{ki}$, \emph{i.e.} $C_k = A_k$. Similarly, if the second maximum is 
bigger, then we have $C_k = B_k$. Finally, if the maxima are the same, then we have $C_k = A_k\cup B_k$. 

Essentially, we have formed the tropical 
line segment between $x$ and $y$, and taken the point on it which is on the maximal cone of $H_j$. Every 
point on this line segment has $C_k\in \{A_k, B_k, A_k\cup B_k\}$ for all $k$.

\textit{Comparability}:
Let $A$ and $B$ be types realized by points $x$ and $y$. An edge from $i$ 
to $j$ in $CG_{A,B}$ indicates that, in some position $p$, $A_p$ contains 
$i$ and $B_p$ contains $j$. This gives $x_i - v_{pi} \geq x_j-v_{pj}$ and $y_j-v_{pj} \geq y_i - v_{pi}$, which implies $x_i - y_i \geq x_j - y_j$. If the edge is directed, then one of these two inequalities is strict, and $x_i - y_i > x_j - y_j$. Intuitively, if we walk from $A$ to $B$, we move more in the $i$ direction than in the $j$ direction. Therefore a cycle in $CG_{A,B}$ would give a series of inequalities which add up to $0 > 0$. 
%

\textit{Surrounding}: Take a point $x$ with type $A$, and an ordered partition $P=(P_1, \ldots, P_r)$. Let $\Delta x = \epsilon (f(1), \ldots, 
f(d))$, where $f(i)$ equals the value of $j$ such that $i\in P_j$. We claim that for $\epsilon$ sufficiently small, $x + \Delta x $ has type $A_P$. Take any 
index $k$. Then the elements of the $k$-th coordinate of the type of $x + \Delta x$ are those $i$ which maximize $(x+\Delta x)_i - v_{ki}$. Since 
$\epsilon$ is sufficiently small, the only way this can happen is if $x_i - v_{ki}$ is maximized for this $i$ (\emph{i.e.} $i\in A_k$). Among these 
elements, $(x+ \Delta x)_i - v_{ki}$ is maximized if and only if $\Delta x_i = \epsilon f(i)$ is maximized, \emph{i.e.} if $i$ is maximal among $A_k$ with 
respect to $P$. This completes the proof.
\end{proof}

\section{Properties of tropical oriented matroids}\label{sec:properties}

The following definitions are motivated by ordinary oriented matroids.


\begin{defn}\label{def:G_A}
Given a type $A$, consider the associated undirected graph $G_A$ with vertex set $[d]$ which is given by connecting $i$ and $j$ if 
there exists some coordinate $A_k$ with $i, j\in A_k$. The \textbf{dimension} of a type $A$ is given by the number of connected components of $A$, minus one. A \textbf{vertex} of a tropical oriented matroid 
is a type $A$ with $G_A$ connected; \emph{i.e.}, one of dimension $0$.  A \textbf{tope} is a type $A = (A_1, \ldots, A_n)$ such that each $A_i$ is a singleton; \emph{i.e.}, one of full dimension $d-1$.
\end{defn}

For tropical hyperplane arrangements, the dimension of a type, as defined above, equals the dimension of the region it describes. \cite[Proposition 17]{DS}

The following lemma will be useful.

\begin{lem}
Refinement is transitive: if $C$ is a refinement of $B$, and $B$ is a refinement of $A$, then $C$ is a refinement 
of $A$.
\end{lem}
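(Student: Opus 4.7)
The plan is to exhibit an explicit ordered partition of $[d]$ with respect to which $C$ is the refinement of $A$. Suppose $B$ is the refinement of $A$ with respect to $P = (P_1, \ldots, P_r)$, and $C$ is the refinement of $B$ with respect to $Q = (Q_1, \ldots, Q_s)$. For each $i\in[n]$ let $m_P(i)$ be the largest $j$ with $A_i \cap P_j \neq \emptyset$, so that $B_i = A_i \cap P_{m_P(i)}$, and let $m_Q(i)$ be the largest $k$ with $B_i \cap Q_k \neq \emptyset$, so that $C_i = B_i \cap Q_{m_Q(i)} = A_i \cap P_{m_P(i)} \cap Q_{m_Q(i)}$.

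Now I would form the common refinement $R$ of $P$ and $Q$: list all nonempty sets of the form $P_j \cap Q_k$, ordered lexicographically with $(j, k) < (j', k')$ iff $j < j'$, or $j = j'$ and $k < k'$. This is clearly an ordered partition of $[d]$.

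The main step is checking that for each position $i$, the last block of $R$ meeting $A_i$ is exactly $P_{m_P(i)} \cap Q_{m_Q(i)}$. Among all blocks $P_j \cap Q_k$ meeting $A_i$, those with maximal $j$ must have $j = m_P(i)$ (by definition of $m_P(i)$, no block with larger $j$ can meet $A_i$, and $B_i = A_i \cap P_{m_P(i)}$ is nonempty so at least one block with $j = m_P(i)$ meets $A_i$). Among the blocks $P_{m_P(i)} \cap Q_k$ meeting $A_i$, the condition "meets $A_i$" is the same as "meets $B_i$", so the maximal such $k$ is exactly $m_Q(i)$. Hence the lexicographically last block of $R$ meeting $A_i$ is $P_{m_P(i)} \cap Q_{m_Q(i)}$, and the $i$-th coordinate of the refinement of $A$ with respect to $R$ is $A_i \cap P_{m_P(i)} \cap Q_{m_Q(i)} = C_i$, as desired.

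I do not anticipate a serious obstacle here; the only thing to get right is the order on the blocks of $R$, and lexicographic order in the direction above is forced by the requirement that refinement picks the \emph{largest} index block meeting each coordinate.
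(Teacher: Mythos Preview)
Your proof is correct and is essentially identical to the paper's own argument: the paper also refines $A$ by the partition $(P_1\cap Q_1, P_1\cap Q_2, \ldots, P_r\cap Q_s)$ in lexicographic order, but simply asserts the conclusion without the verification you carry out. Your added detail that the lex-last block of $R$ meeting $A_i$ is $P_{m_P(i)}\cap Q_{m_Q(i)}$ is exactly the check the paper leaves to the reader.
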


\begin{proof}
Suppose $B$ is a refinement of $A$ with respect to the ordered partition $(P_1, \ldots, P_r)$, and $C$ is a 
refinement of $B$ with respect to the ordered partition $(Q_1, \ldots, Q_s)$. Let $X_{ij}=P_i\cap Q_j$. Then it is easy to see that $C$ is the refinement of $A$ given by $(X_{11}, X_{12}, \ldots, X_{1s}, 
X_{21}, \ldots, X_{rs})$.
\end{proof}

\begin{lem}\label{subset-refine}
Suppose that $A$ and $B$ are types of a tropical oriented matroid, and suppose that we have $B_i\subseteq A_i$ for 
all $i\in [n]$. Then $B$ is a refinement of $A$.
\end{lem}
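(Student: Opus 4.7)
The plan is to construct, from the comparability graph $CG_{A,B}$, an ordered partition $(P_1,\dots,P_r)$ of $[d]$ whose associated refinement of $A$ is exactly $B$. First I would unpack $CG_{A,B}$ under the hypothesis $B_i\subseteq A_i$: here $A_i\cap B_i=B_i$, so the edges coming from position $i$ consist of undirected edges between every pair $\{j,k\}\subseteq B_i$, together with directed edges $j\to k$ for each $j\in A_i\setminus B_i$ and each $k\in B_i$. There are no other edges.

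Next I would extract the \emph{undirected} subgraph of $CG_{A,B}$ and let $C_1,\dots,C_m$ be its connected components on $[d]$. Contracting these components produces a directed graph $\widetilde{CG}$ on $\{C_1,\dots,C_m\}$. I would show $\widetilde{CG}$ is a DAG by lifting: a hypothetical directed cycle $C_{i_1}\to\cdots\to C_{i_s}=C_{i_1}$ in $\widetilde{CG}$ gives, in $CG_{A,B}$, a closed walk that alternates directed edges between components with undirected paths inside components, producing a directed cycle in the semidigraph sense, contradicting the Comparability axiom. The same lifting rules out any directed edge $j\to k$ with $j,k$ in the same $C_\ell$. A topological sort of $\widetilde{CG}$ therefore yields a level function $f\colon[d]\to\{1,\dots,r\}$ that is constant on each $C_\ell$ and strictly increasing along every directed edge of $CG_{A,B}$. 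Take $P_\ell:=f^{-1}(\ell)$, discarding empty levels.

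Finally I would verify that the refinement of $A$ with respect to $(P_1,\dots,P_r)$ is $B$. Fix a position $i$. Since $B_i\subseteq A_i\cap B_i$, every two elements of $B_i$ are joined by an undirected edge from position $i$, hence lie in a common $C_\ell$; so $f$ takes a single value $c$ on $B_i$. For any $j\in A_i\setminus B_i$ and any $k\in B_i$, the directed edge $j\to k$ forces $f(j)<f(k)=c$. Therefore $c=\max_{j\in A_i}f(j)$, so $m(i)=c$ in the refinement definition, and
\[
A_i\cap P_{m(i)}=\{\,j\in A_i:f(j)=c\,\}=B_i,
\]
as required.

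The main obstacle is the acyclicity bookkeeping in the middle step: one must be careful that the contracted graph $\widetilde{CG}$ is well-defined (no directed edges inside an undirected component) and acyclic, both of which come from a single lifting argument against the Comparability axiom. Once $f$ is in hand, the final verification is essentially forced by the combinatorial description of $CG_{A,B}$ in the $B_i\subseteq A_i$ case.
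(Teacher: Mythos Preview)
Your proof is correct. The paper takes a different, more compressed route: it argues by contradiction, writing down the linear system $x_j = x_k$ for $j,k \in B_i$ and $x_j > x_k$ for $j \in B_i$, $k \in A_i \setminus B_i$, and then invoking linear programming duality (Farkas' lemma) to say that infeasibility forces a positive combination summing to $0 > 0$, which exhibits a directed cycle in $CG_{A,B}$ and contradicts comparability. Your argument is the constructive counterpart of exactly this: you observe that acyclicity of $CG_{A,B}$ after contracting undirected components permits a topological sort, and that sort \emph{is} a feasible solution to the paper's system, from which you read off the ordered partition directly. Your version is more elementary in that it avoids the appeal to LP duality and actually produces $(P_1,\dots,P_r)$; the paper's version is terser but leans on a black box. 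Both rest on the same combinatorial equivalence---feasibility of a system of strict and weak difference constraints versus acyclicity of the associated semidigraph---so neither proof buys anything the other cannot reach.
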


\begin{proof}
Suppose that $B$ is not a refinement of $A$. This means that there is no way to consistently break ties among each 
$A_i$ so that $B_i$ consists of the maximal elements of $A_i$; in other words, the set of equations given by $x_j 
= x_k$ for $j, k\in B_i$ and $x_j > x_k$ for $j\in B_i, k\in A_i\setminus B_i$ has no solution. By linear 
programming duality, this implies that some linear combination of these adds up to $0 > 0$. The inequalities which 
contribute will then form a directed cycle in the comparability graph $CG_{A,B}$, violating the comparability axiom.
\end{proof}

As in ordinary oriented matroids, we have the following theorem.

\begin{thm}\label{topes-determine}
The topes of a tropical oriented matroid $M$ completely determine it. To be precise, $A = (A_1, \ldots, A_n)$ is in 
$M$ if and only if the following two conditions hold:
\begin{itemize}
\item $A$ satisfies the compatibility axiom with every tope of $M$ (\emph{i.e.} $CG_{A, T}$ is acyclic for every tope $T$ 
of $M$.)
\item All of $A$'s total refinements are topes of $M$.
\end{itemize}
\end{thm}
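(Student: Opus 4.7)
The plan is to prove the biconditional in two directions.

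The forward direction is immediate from the axioms: if $A \in M$, the surrounding axiom gives that every refinement of $A$ (and in particular every total refinement, which is automatically a tope) lies in $M$, yielding (ii), while the comparability axiom directly yields (i).

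For the reverse direction, I assume $A$ satisfies (i) and (ii), and plan to show $A \in M$ by induction on $\|A\| := \sum_{i=1}^n (|A_i|-1)$. The base case $\|A\| = 0$ is immediate: $A$ is then a tope equal to its own unique total refinement, so $A \in M$ by (ii). For the inductive step, pick $j$ with $|A_j| \geq 2$ and $a \in A_j$, and consider the split types
$$A^+ := (A_1, \ldots, \{a\}, \ldots, A_n), \qquad A^- := (A_1, \ldots, A_j \setminus \{a\}, \ldots, A_n),$$
each of strictly smaller norm and agreeing with $A$ outside position $j$. If both $A^+$ and $A^-$ lie in $M$, the elimination axiom applied to them at position $j$ will force some $C \in M$ with $C_j = A_j$ and $C_k = A_k$ for $k \neq j$ (with no ambiguity, since $A^+_k = A^-_k = A_k$), so that $C = A$. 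It will then suffice to apply the induction hypothesis to $A^+$ and $A^-$, which requires verifying (i) and (ii) for them.

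Condition (i) is easily inherited for $A^\pm$, since $CG_{A^\pm, T}$ is a subgraph of $CG_{A, T}$ (with fewer edges in position $j$) and so acyclic. The main obstacle will be condition (ii): I must show that every total refinement of $A^+$ (or $A^-$) is a tope of $M$. The strategy is to exhibit such a total refinement $T$ --- realized by some ordered partition $P$ --- as a total refinement of $A$ via a modified partition $P^*$: for $A^+$, extract $a$ from its part in $P$ and reinsert it as a singleton at the end, so that $a$ becomes the highest-priority element of $A_j$ and recovers $T_j = \{a\}$; for $A^-$, insert $\{a\}$ just before the part containing $T_j$. Then $T$ becomes a total refinement of $A$, hence lies in $M$ by (ii) on $A$.

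The technical crux, and main obstacle, is verifying that this modification preserves the refinement at positions $k \neq j$, particularly when $a \in A_k$: moving $a$ to a new part could in principle perturb which element of $A_k$ is highest-priority. Here condition (i) on $A$ is indispensable: any such perturbation would correspond to a putative tope whose comparability graph with $A$ contains a cycle, contradicting (i) and so ruling out the problematic configuration from $M$. Orchestrating this interplay of hypotheses (i) and (ii) on $A$, handling the various positions and potential disturbances, is the main technical work of the proof.
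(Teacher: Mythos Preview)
Your inductive step has a genuine gap: the types $A^+$ and $A^-$ need not satisfy condition (ii), so the induction hypothesis cannot be applied to them. The problem is that $A^+$ and $A^-$ are not, in general, refinements of $A$ --- a refinement applies one ordered partition uniformly to \emph{all} coordinates, whereas your split alters only position $j$. Hence total refinements of $A^\pm$ need not be total refinements of $A$, and nothing forces them into $M$. Concretely, take the tropical oriented matroid with parameters $(2,d)$ coming from two coincident hyperplanes: its types are exactly the pairs $(S,S)$ with $\emptyset\neq S\subseteq[d]$, and its topes are the $(j,j)$. The type $A=(12,12)$ lies in $M$ and satisfies (i) and (ii). With $j=1$, $a=1$, your $A^+=(1,12)$ has $(1,2)$ as a total refinement, but $(1,2)\notin M$, so (ii) fails for $A^+$. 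Your appeal to condition (i) cannot rescue this: (i) only constrains $CG_{A,T}$ for topes $T$ \emph{already in} $M$, so it gives no mechanism for forcing a missing tuple like $(1,2)$ into $M$; at best it tells you that if $(1,2)$ \emph{were} in $M$ you would have a contradiction --- which is the opposite of what you need.

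The paper's proof circumvents this by splitting $A$ into two \emph{honest} refinements. Given $\{1,2\}\subseteq A_1$, it lets $S$ be the connected component of $2$ in $G_A\setminus 1$ and takes $B,C$ to be the refinements of $A$ by the ordered partitions $(S,\{1\}\cup S^c)$ and $(\{1\}\cup S^c,S)$. Because refinement is transitive, conditions (i) and (ii) pass to $B$ and $C$, so by minimality $B,C\in M$; eliminating at position $1$ then produces some $D\in M$. The cost of using genuine refinements is that $B$ and $C$ now disagree at many positions, so one must still argue that $D=A$; this is the content of the paper's lemma, and it is exactly there --- not in the inductive setup --- that condition (i) is invoked, by refining $D$ to a tope of $M$ and exhibiting a directed cycle in its comparability graph with $A$.
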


\begin{proof}
First, note that if $A$ satisfies the conditions, so does every refinement $B$ of $A$: the total refinements of a 
refinement of $A$ are a subset of the total refinements of $A$ itself, and the comparability graph $CG_{B, T}$ is a 
subgraph of the comparability graph $CG_{A, T}$ for each $T$.

Suppose that we have a minimal counterexample with respect to refinement: an $n$-tuple $A$ such that every refinement of $A$ is in 
$M$, but $A$ itself is not. 
Throughout the following argument, it will be useful to keep in mind the example $A=(12,69,12,67,23,18,345,135)$.

We know that $A$ has some element that is not a singleton; without loss of generality, assume that $A_1$ contains $\{1, 2\}$. 
Consider the connected components of $G_A\setminus 1$, the graph obtained from $G_A$ by deleting vertex $1$ and the edges incident to it. One of these components contains 2; let $S$ consist of 
this subset of $[d]$, without loss of generality $\{2, \ldots, r\}$, and let $T$ be $\{r+1, \ldots, d\}$. In the example, $S=\{2,3,4,5\}$ and $T=\{6,7,8,9\}$. 

Now consider the refinements 
$B$ given by the partition $(S, \{1\}\cup T)$, and $C$ given by the partition $(\{1\}\cup T, S)$. In our example $B=(1,69,1,67,23,18,345,1)$ and $C=(2,69,2,67,23,18,345,35)$. Since $B_1,C_1 \neq A_1$, we know that $B$ and $C$ are 
proper refinements of $A$, and therefore are in $M$ by the minimality assumption on $A$.

Now eliminate in position 1 between $B$ and $C$ to get 
some element $D$ of $M$. We will prove that $D=A$, thereby showing that $A$ is in $M$.
In our example we have $D=(12,69,*,67,23,18,345,*)$. In general we have $D_1 = B_1\cup C_1 = \{1\}\cup \{A_1\setminus \{1\}\} = A_1$. If position $i$ does not involve any element of $S$, or if it involves elements of $S$ and does not involve $1$, then we have $B_i = C_i = A_i$ so $D_i = A_i$.

The remaining case is when $A_i$
contains $1$ as well as some elements of $S$. In this case, we have $B_i = \{1\}$, $C_i = A_i \setminus \{1\}$. By the elimination 
axiom, $D_i$ is equal to $B_i, C_i$, or $B_i\cup C_i$. 
The proof of the theorem is then complete with the following lemma.
\end{proof}

\begin{lem}
In the situation of the remaining case of Theorem~\ref{topes-determine}, where $A_i$ contains $1$ and some elements of $S$, we have that 
$D_i$ contains 1 and some element of $C_i$, and therefore $D_i = A_i$. 
\end{lem}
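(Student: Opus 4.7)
The plan is to show $D_i = A_i$ by combining the elimination-axiom constraint $D_i \in \{B_i, C_i, B_i \cup C_i\}$ with two separate claims: $D_i \ni 1$ and $D_i \cap C_i \neq \emptyset$.

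First I would pin down $B_i$ and $C_i$ by establishing that $A_i \subseteq \{1\} \cup S$. Since $A_i$ contains $1$ and some element of $S$, if there were a $t \in A_i \cap T$, then for any $s \in A_i \cap S$ the clique on $A_i$ would produce an edge $s$--$t$ of $G_A$ not involving vertex $1$, so $s$ and $t$ would remain connected in $G_A \setminus \{1\}$, contradicting that $S$ and $T$ are distinct components. Hence $A_i \cap T = \emptyset$, so $B_i = A_i \cap (\{1\} \cup T) = \{1\}$ and $C_i = A_i \cap S = A_i \setminus \{1\}$. The elimination axiom then restricts $D_i$ to $\{\{1\}, A_i \setminus \{1\}, A_i\}$, and showing that $D_i$ contains both $1$ and some element of $C_i$ leaves only $D_i = A_i$.

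To establish $D_i \ni 1$, I would argue by contradiction: assume $D_i = A_i \setminus \{1\}$ and exhibit a directed cycle in a comparability graph $CG_{D, X}$ for a carefully chosen $X \in M$, contradicting the comparability axiom. Good candidates for $X$ are $B$, or a proper refinement of $A$ (which is in $M$ by the minimality hypothesis on the counterexample $A$). The cycle should be produced by splicing together three kinds of edges: a directed edge linking $1$ to $A_1 \setminus \{1\} \supseteq \{2\}$ coming from position $1$, where $D_1 = A_1$ contains both $1$ and $2$; a directed edge between $1$ and $A_i \setminus \{1\}$ at position $i$ forced by $1 \notin D_i$; and an undirected path in $CG_{D, X}$ along a route in $S$ connecting $2$ to some vertex of $A_i \cap S$, which exists because both lie in the connected component $S$ of $G_A \setminus \{1\}$. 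To show $D_i \cap C_i \neq \emptyset$, I would apply the symmetric argument with the roles of $B$ and $C$ interchanged.

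The main obstacle is the explicit construction of this directed cycle. In the natural comparability graphs the directed edges all flow consistently into or out of vertex $1$, so the return leg of any cycle must be traversed via undirected edges, and one must verify that the undirected path along $S$ in $G_A \setminus \{1\}$ actually lifts to $CG_{D, X}$ without being broken at some position $k$ where $D_k$ fails to agree with $X_k$ on the relevant $S$-vertices. Selecting $X$ so that this lifting is guaranteed, and exploiting the specific structure ($A_1 \ni \{1,2\}$ together with the $S$-connectivity used to define $S$ in the first place) to close the loop, is the heart of the argument; if a single companion type does not suffice, one can iterate the elimination axiom or invoke the surrounding axiom on $D$ to produce additional types in $M$ that plug the missing edges.
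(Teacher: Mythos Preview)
Your preliminary computation is fine: from $A_i \subseteq \{1\} \cup S$ you correctly get $B_i = \{1\}$, $C_i = A_i \setminus \{1\}$, and hence $D_i \in \{\{1\},\, A_i\setminus\{1\},\, A_i\}$. But the central strategy is fatally flawed. You propose to derive a contradiction from a directed cycle in $CG_{D,X}$ for some $X \in M$. This is impossible: $D$ itself lies in $M$ (it was produced by the elimination axiom from $B,C \in M$), and the comparability axiom of $M$ then forces $CG_{D,X}$ to be acyclic for \emph{every} $X \in M$. No clever choice of $X$, and no further eliminations or refinements (all of which stay inside $M$), can ever create such a cycle. The difficulty you flag in your last paragraph is not a technicality to be massaged away; it is a symptom of this dead end.

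The contradiction has to be drawn against $A$, which is not known to lie in $M$ but does satisfy the standing hypothesis of Theorem~\ref{topes-determine} that $CG_{A,T}$ is acyclic for every tope $T$ of $M$. The paper uses the surrounding axiom on $D$ to manufacture such a tope $T$, and then exhibits a directed cycle in $CG_{A,T}$. For this one must control $D$ along a path in $G_A$ from $1$ to some $x \in C_i$, and the key device (absent from your outline) is to choose $i$, $x$, and the path so that the path is \emph{shortest}. Shortness forces every intermediate position $k$ witnessing an edge $\{y,y+1\}$ with $y \geq 2$ to satisfy $A_k \cap [x] = \{y,y+1\}$; in particular $1 \notin A_k$, so $A_k \subseteq S$ and hence $B_k = C_k = A_k = D_k$. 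The lifting problem you anticipated therefore evaporates. One then refines $D$ by the ordered partition $([d]\setminus[x],\{1\},\{2\},\ldots,\{x\})$ when $D_i = \{1\}$, or by the reversed order when $D_i \cap [x] = \{x\}$, and passes to a total refinement. The resulting tope $T \in M$ has entries $(2,3,\ldots,x,1)$ (respectively $(1,2,\ldots,x-1,x)$) in the positions along the path, and $CG_{A,T}$ contains the directed cycle $1\to 2\to\cdots\to x\to 1$ (respectively its reverse), contradicting the hypothesis on $A$.
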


\begin{proof}
Suppose there is one such $D_i$ which does not contain both 
1 and some element of $C_i$. Let $x$ be an element of $C_i$. Since $x\in S$, there exists a nontrivial path in $G_A$ from $1$ to $x$ 
consisting only of $1$ and elements of $S$. Choose $D_i$, $x$, and the path from $1$ to $x$ so that the length of this path is minimal. In our example, they could be $D_8$, $x=3$, and the path $123$; we would then know that $D=(12,69,12,67,23,18,345,*)$

For notational convenience, assume that the path from $1$ to $x$ is $12\ldots x$. This means that there exist positions in $A$ containing each of $\{1, 2\}, \ldots, \{x-1, x\}$. In other words, there is a subconfiguration of $A$ given by $(12, 23, 34, 
\ldots, (x-1)x, 1x)$, where the last position is $i$. Since this is the shortest path, none of the numbers $\{1,2, \ldots, x\}$ aside from the given ones appear in any of these positions. In our example, we are talking about the first, fifth, and seventh positions.

If we restrict our attention to the numbers $1, \ldots, x$, then $A = (12, 23, \ldots, (x-1)x, 1x)$ in these positions. This
means that $D$ agrees with $A$ in these positions, except possibly the last one, where it is $\{1\}$ or $\{x\}$. 

Suppose it is $\{1\}$, so that $D = (12, 23, \ldots, (x-1)x, 1)$. Consider a refinement of $D$ by the partition $([d]\setminus 
[x], \{1\}, \{2\}, \ldots, \{x\})$. In these positions, it is equal to $(2, 3, \ldots, x, 1)$; we can take a further total 
refinement to get a tope $T$ with these elements in these positions. But this tope is incomparable to $A$; $CG_{A, T}$ contains the 
directed cycle $(1, 2, \ldots, x)$. This is a contradiction.

Similarly, if $D_i = \{x\}$, then $D = (12, \ldots, (x-1)x, x)$. Refining $D$ by $([d]\setminus [x], x, x-1, \ldots, 2, 
1)$ and further refining yields a tope with $(1, 2, \ldots, x-1, x)$ in these positions. This tope is also incomparable to 
$A$, with $CG_{A, T}$ containing the same cycle in the opposite direction. 
\end{proof}

The vertices also determine the tropical oriented matroid.

\begin{thm}\label{thm:vertices}
A tropical oriented matroid is completely determined by its vertices. To be precise, all types are refinements of vertices.
\end{thm}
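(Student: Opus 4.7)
The plan is to prove the stronger statement that every type $A \in M$ is contained componentwise in some vertex $V \in M$, i.e., $A_i \subseteq V_i$ for every $i \in [n]$; by Lemma~\ref{subset-refine}, $A$ is then a refinement of $V$.

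I would take $V$ to be a maximal element, under componentwise containment, of the nonempty finite collection $\{B \in M : A_i \subseteq B_i \text{ for all } i\}$, and argue that any such $V$ must be a vertex. The ``determination'' half of the theorem then follows at once, since the surrounding axiom ensures that every refinement of a vertex already lies in $M$.

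Suppose for contradiction that $V$ is not a vertex, so that $G_V$ has at least two connected components; label two of them $S$ and $S'$. Since each $V_i$ is a clique in $G_V$, it lies entirely in a single component. Pick $s \in S'$ and a position $p$ with $V_p \subseteq S$; by the boundary axiom $\textbf{s} \in M$. Applying the elimination axiom to $V$ and $\textbf{s}$ at position $p$ yields a type $C \in M$ with $C_p = V_p \cup \{s\}$ and $C_q \in \{V_q, \{s\}, V_q \cup \{s\}\}$ for $q \neq p$. Since $s \notin S \supseteq V_p$, we already have $C_p \supsetneq V_p$; and if we can arrange that $C_q \neq \{s\}$ whenever $V_q \neq \{s\}$, then $V_q \subseteq C_q$ holds for all $q$, contradicting the maximality of $V$.

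The main obstacle is this control step: the elimination axiom grants existence of $C$, not uniqueness, so \emph{a priori} the ``bad'' outcome $C_q = \{s\}$ for some $q$ with $V_q \not\subseteq \{s\}$ might occur. I expect the key is to exploit the comparability axiom, applied either to $CG_{V, C}$ or by comparing elimination outputs across different choices of $p$ and $s$, together with the component structure of $G_V$, to show that any such bad outcome forces a directed cycle through elements lying in two different components of $G_V$. A separate and easier degenerate case handles a component of $G_V$ consisting of a single vertex $t \in [d]$ not appearing in any $V_i$: then elimination with $\textbf{t}$ at any position automatically enlarges some coordinate of $V$ with no position $q$ possibly being $\{t\}$ if $V_q \neq \{t\}$. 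Carefully bookkeeping which elements of $[d]$ appear in which $V_i$, and in which component, should close the case analysis.
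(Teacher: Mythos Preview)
Your overall strategy---take $V$ maximal above $A$ and show $V$ is a vertex by producing a strictly larger type if not---is exactly the paper's. The gap is in the control step, and it is a genuine one.

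First, your ``degenerate case'' is not automatic. Suppose $t\in[d]$ appears in no $V_i$, and eliminate $V$ with $\textbf{t}$ at position $p$ to get $C$. In $CG_{V,C}$ every edge involving $t$ is directed \emph{into} $t$: at position $p$ we get $u\to t$ for each $u\in V_p$ (since $t\notin V_p$), and at a ``bad'' position $q$ with $C_q=\{t\}$ we get $v\to t$ for each $v\in V_q$ (since $t\notin V_q$). There is no directed cycle, so comparability does not exclude $C_q=\{t\}$. The paper handles this by \emph{iterated} elimination: whenever the current type equals $\{t\}$ in some coordinate $q$, eliminate it with $V$ at $q$; the number of bad coordinates strictly decreases, and the last coordinate you fixed ends up equal to $V_q\cup\{t\}\supsetneq V_q$.

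Second, in the main case (two components $S,S'$ each meeting some $V_i$), a single elimination with $\textbf{s}$ for one $s\in S'$ is not enough, and comparing different choices of $p$ and $s$ does not obviously close the gap. The paper's argument is substantially more elaborate: it grows a type $A_k$ supported on $\{1,\dots,k\}\subseteq S$ one element at a time along a spanning tree of $G_V|_S$, eliminating $A_{k-1}$ with $\textbf{k}$ at a position where $V$ already contains both $k$ and something in $[k-1]$. That position produces an \emph{undirected} edge in $CG_{A_k,V}$ joining $k$ to $[k-1]$; combined with the undirected edges already present, this forces $A_k$ to agree with $V\cap[k]$ at every position where $V$ meets $[k]$ (any disagreement would yield a directed edge inside $[k]$, and the undirected tree closes it into a directed cycle). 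Only after building $A_{|S|}$, which matches $V$ on all $S$-coordinates, does one eliminate with $V$ at an $S'$-coordinate and then iterate as in the degenerate case. The spanning-tree construction and the use of undirected edges from the elimination position are the missing ideas in your sketch.
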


\begin{proof}
We need to show that if a type $A = (A_1, \ldots, A_n)$ is not a vertex, then there exists some type of which it is a 
refinement. By Lemma~\ref{subset-refine}, we just need to find a type which strictly contains it. To simplify notation, 
we will do a proof by example; this method clearly works in general.

Suppose that $A$ is not a vertex. This means that the graph $G_A$ is disconnected. There are two possible cases:

\textit{Case 1.} All of the numbers appearing in the $A_i$ are in the same connected component, but there is an element 
of $[d]$ which appears in none of them. Suppose $A = (123, 14, 24, 234, 23)$. Eliminate $A$ with $\textbf{5}=(5,5,5,5,5)$ in position 
1. This yields $B = (1235, B_2, B_3, B_4, B_5)$, where each $B_i$ is equal to $A_i$, $\{5\}$, or $A_i\cup \{5\}$. We are 
done unless some $B_i$ is equal to $\{5\}$. In this case, we eliminate $B$ with $A$ in that position to get a type $C$: 
then $C$ contains $A$ in each coordinate where $B$ does, as well as in the position we just eliminated. Continue 
this process, eliminating in each position where the resulting type is equal to $\{5\}$, until we obtain a type which 
contains $A$ in every coordinate. In the last position we eliminated in, this type also contains $\{5\}$, so 
it 
strictly contains $A$. This completes the proof.

\textit{Case 2.} The numbers appearing in the $A_i$ are in two or more connected components of $G_A$. Take the following 
example:
\[
A = (12,46,256,135,34,78,79,9,7)
\]
so that one connected component of $G_A$ is $[6]$. 
In particular, every coordinate of $A$ is either a subset of $[6]$ or of $[7, 9]$. 
We use the symbol $S^*$ to represent a non-empty set containing only elements from $S$.  The graph $G_A$ is shown in Figure \ref{fig:G_A}. In our example, the induced subgraph of $G_A$ with vertices $1,2,\ldots, i$ is connected for all $1 \leq i \leq 6$. We will need this property, which can be accomplished in the general case by suitable relabelling.

\begin{figure}[h]
\begin{center}\includegraphics[height=4cm]{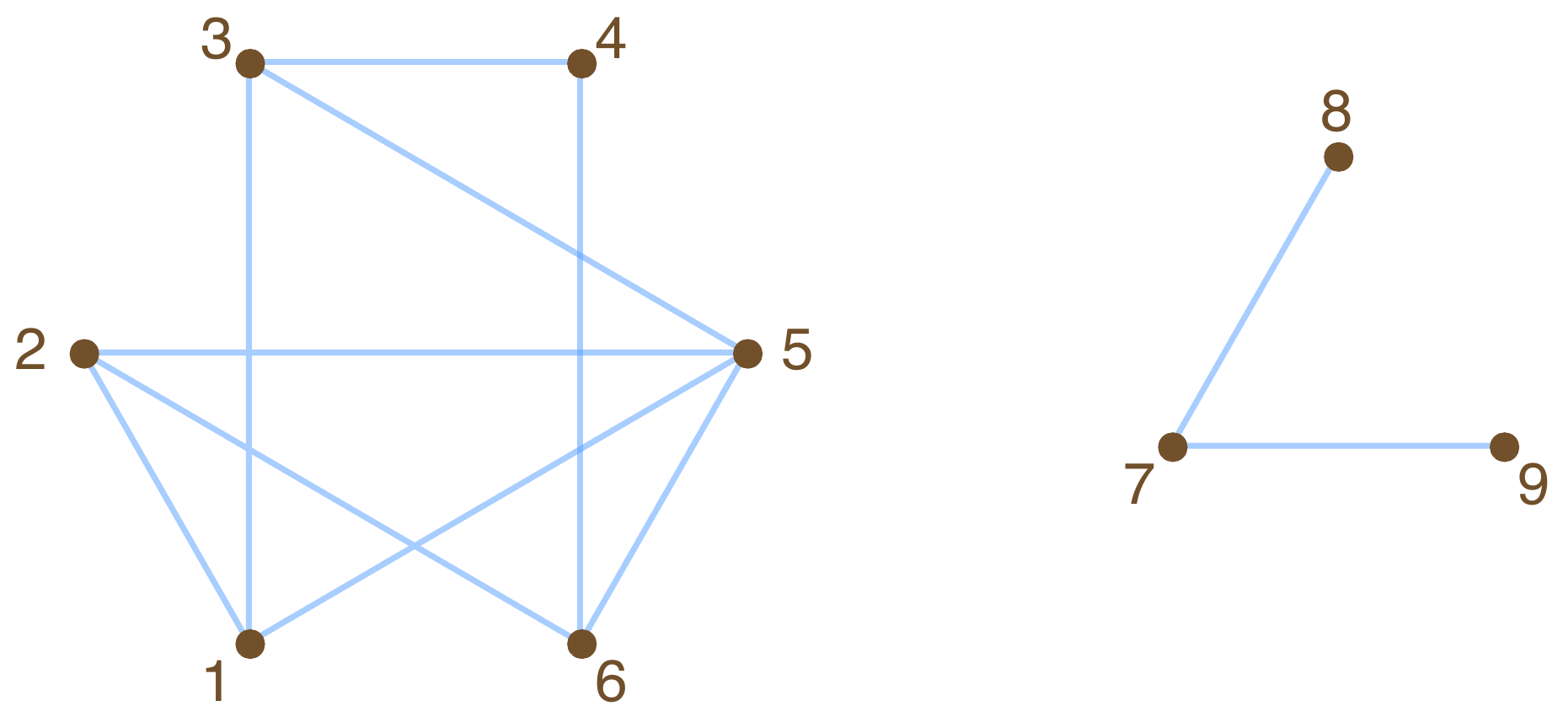}\end{center}
\caption{\label{fig:G_A} 
The graph $G_A$ for $A=(12,46,256,135,34,78,79,9,7)$}
\end{figure}

We claim first that there is a type of the form
\[
A_6=(12,46,256,135,34, [6]^*, [6]^*, [6]^*, [6]^*);
\]
\emph{i.e.}, a type which agrees with $A$ on all coordinates in the $[6]$ category, and consists only of elements from $[6]$. We will build it up one step at a time by building a type $A_i$ ($1\leq i \leq 6$) with elements from $[i]$ such that, wherever the restriction of $A$ to $[i]$ is nonempty, it agrees with $A_i$.

First, eliminate between \textbf{1} and \textbf{2} in position 1. This yields a type 
\[
A_2=(12, [2]^*,[2]^*,[2]^*,[2]^*,[2]^*,[2]^*,[2]^*,[2]^*, [2]^*).
\]
But in any position where $A$ contains a 1 or a 2 (or both), $A_2$  agrees with $A\cap [2]$; if not, then the acyclic comparability graph $CG(A_2,A)$ has a directed edge between elements of $[2]$, which is impossible as it also has an undirected edge between 1 and 2 from the first position. So 
\[
A_2=(12, [2]^*,2,1,[2]^*,[2]^*,[2]^*,[2]^*,[2]^*).
\]
To build $A_3$ 
we eliminate between $A_2$ and \textbf{3} in position 4, to get
\[
A_3=([3]^*,[3]^*,[3]^*,13,[3]^*,[3]^*,[3]^*,[3]^*,[3]^*).
\]
Now, in position 1, $A_3$ equals $3,12$ or $123$; and in fact, it has to equal $12$, since $CG(A_3,A)$ already has an undirected edge between $3$ and $1$. We now see that $\{1,2,3\}$ is connected by undirected edges in $CG(A_3,A)$. Therefore, wherever $A$ contains a 1,2,or 3, $A_3$ must agree with $A\cap [3]$; that is,
\[
A_3=(12,[3]^*,2,13,3,[3]^*,[3]^*,[3]^*,[3]^*).
\]
We continue in this way. To build $A_i$, we eliminate between $A_{i-1}$ and $A$ in a position which connects $i$ to $\{1,\ldots,i-1\}$ in $G_A$. We then ``grow" a spanning tree of $G_A$ restricted to $[i]$, starting at vertex $i$. Each edge in the tree guarantees that $A_i$ and $A \cap [i]$ agree in a new coordinate.  Once we have done this, we know that all of $[i]$ is connected in $CG(A_i,A)$ by undirected edges. Thus, wherever $A$ contains a $1,2,\ldots,$ or $i$, $A_i$ must agree with $A\cap [i]$.

In our example, we obtain $A_4$ by eliminating between $A_3$ and $A$ in position 5. We grow the spanning tree with edges 43, 31, 12 in that order. This guarantees, in that order, that position 5 is 34, position 4 is 13, and position 1 is 12. This means that $[4]$ is  connected in $CG(A_4,A)$ by undirected edges, which forces position 2 to be 4 and position 3 to be 2. Therefore
\[
A_4 = (12,4,2,13,34,[4]^*,[4]^*,[4]^*,[4]^*).
\]
We eliminate again with \textbf{5} in position 3, and then with \textbf{6} in position 3, to obtain the desired type.

Now that we have a type 
\[
A_6=(12,46,256,135,34, [6]^*, [6]^*, [6]^*, [6]^*),
\]
we proceed as in case 1. We eliminate $A_6$ with $A$ in position 6 to obtain an element which contains $A$ in 
the first seven positions, and 
strictly contains $A$ (in position 6.) If there are any positions where the new type does not contain $A$, it 
consists there of a subset of 
$[6]^*$; we eliminate in that position with $A$. We continue doing this, eventually obtaining an element which contains $A$ in every position and strictly contains 
$A$ in the position where we last eliminated.
\end{proof}

The next two propositions establish the tropical analogues of the standard matroid operations of deletion and contraction.

\begin{prop}
Let $M$ be a tropical oriented matroid with parameters $(n, d)$. Pick any coordinate $i\in [n]$. Then the \textbf{deletion} $M_{\setminus 
i}$, which consists of all $(n-1, d)$ types which arise from types of $M$ by deleting coordinate $i$, is also a tropical oriented matroid.
\end{prop}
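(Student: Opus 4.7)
The plan is to verify each of the four axioms for $M_{\setminus i}$ by the same general strategy: every type $A' \in M_{\setminus i}$ lifts (by definition) to some type $A \in M$ whose $i$-th coordinate we have forgotten, so we can apply the corresponding axiom inside $M$ and then delete the $i$-th coordinate of the resulting type. Because deletion is a purely coordinate-wise operation, each axiom transfers almost automatically, modulo keeping track of how positions $[n-1]$ in $M_{\setminus i}$ correspond to the positions of $[n] \setminus \{i\}$ in $M$.

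For \textbf{boundary}, observe that the constant type $\mathbf{j} = (j, \ldots, j)$ of length $n$ lies in $M$, and its deletion is the constant type of length $n-1$, proving boundary for $M_{\setminus i}$. For \textbf{elimination}, given $A', B' \in M_{\setminus i}$ and a position $j \in [n-1]$, choose lifts $A, B \in M$ and eliminate between them in the position of $[n]$ corresponding to $j$; the resulting $C \in M$ deletes to a type $C' \in M_{\setminus i}$ with $C'_j = A'_j \cup B'_j$ and $C'_k \in \{A'_k, B'_k, A'_k \cup B'_k\}$ for every other $k$, since these properties are inherited coordinate-by-coordinate from $C$. For \textbf{comparability}, the semidigraph $CG_{A', B'}$ is obtained from $CG_{A, B}$ by removing precisely those edges contributed by position $i$, with all remaining edges keeping their direction (or lack thereof); in particular $CG_{A', B'}$ is a subgraph of $CG_{A, B}$, so acyclicity is inherited. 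For \textbf{surrounding}, given $A' \in M_{\setminus i}$ and an ordered partition $P$ of $[d]$, lift $A'$ to $A \in M$ and consider the refinement $A_P \in M$ with respect to $P$; since the refinement operation acts independently on each coordinate, deleting coordinate $i$ from $A_P$ yields exactly the refinement $A'_P$, which is therefore in $M_{\setminus i}$.

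There is no real obstacle here: the argument is entirely routine once one notices that boundary, elimination, comparability, and surrounding are all statements about the individual coordinates of a type or about pairs of types viewed coordinate-wise, and that deletion respects this coordinate-wise structure. The only mild bookkeeping issue is the reindexing $[n] \setminus \{i\} \leftrightarrow [n-1]$ in the elimination axiom, which is purely cosmetic. Thus all four axioms hold and $M_{\setminus i}$ is a tropical oriented matroid with parameters $(n-1, d)$.
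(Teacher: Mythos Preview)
Your proof is correct and follows essentially the same approach as the paper: lift each type to a preimage in $M$, apply the relevant axiom there, and observe that deletion of coordinate $i$ preserves the conclusion (with the comparability graph becoming a subgraph and refinement commuting with deletion). The paper's proof is just a terser version of yours.
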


\begin{proof}
It is straightforward to verify each axiom. For boundary, the deletion of $\textbf{j}=(j, j, \ldots, j)$ is again $(j, j, \ldots, j)$. To 
eliminate between two types of $M_{\setminus i}$, simply find any preimages of them in $M$ and eliminate between these. The 
comparability graph $CG_{A, B}$ is a subgraph of the comparability graph of any preimages of $A$ and $B$ 
in $M$, and hence is 
acyclic. Finally, the surrounding axiom holds, as refinement commutes with deletion.
\end{proof}

\begin{prop}
Let $M$ be a tropical oriented matroid with parameters $(n, d)$. Pick any direction $i\in [d]$. Then the \textbf{contraction} $M_{/i}$, which 
consists of all types of $M$ which do not contain $i$ in any coordinate, is also a tropical oriented matroid (with parameters $(n, d-1)$.)
\end{prop}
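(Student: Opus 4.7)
The plan is to verify each of the four axioms for $M_{/i}$ directly, using the fact that every type in $M_{/i}$ is a type of $M$ with the extra property that the forbidden direction $i$ never appears. Throughout, I will identify an $(n,d-1)$-type on the ground set $[d]\setminus\{i\}$ (after a trivial relabelling) with the corresponding type of $M$.

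For \emph{boundary}, for each $j \in [d] \setminus \{i\}$ the type $\mathbf{j} = (j,j,\ldots,j)$ lies in $M$ by the boundary axiom of $M$, and none of its entries contain $i$, so $\mathbf{j} \in M_{/i}$. For \emph{elimination}, given $A, B \in M_{/i}$ and $j \in [n]$, the elimination axiom for $M$ produces some $C \in M$ with $C_j = A_j \cup B_j$ and $C_k \in \{A_k, B_k, A_k \cup B_k\}$ for every $k$. Since $i \notin A_k \cup B_k$ for every $k$ by hypothesis, every possibility for $C_k$ avoids $i$, so $C \in M_{/i}$. For \emph{comparability}, the graph $CG_{A,B}$ computed inside $M_{/i}$ has vertex set $[d]\setminus\{i\}$ and coincides with the subgraph of the comparability graph computed in $M$ obtained by deleting the (isolated, as $i$ appears in no entry of $A$ or $B$) vertex $i$; acyclicity is inherited.

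The step that requires a small extra observation is \emph{surrounding}. Suppose $A \in M_{/i}$ and let $P = (P_1, \ldots, P_r)$ be an ordered partition of $[d]\setminus\{i\}$. Extend $P$ to an ordered partition $P' = (P_1, \ldots, P_r, \{i\})$ of $[d]$. For each coordinate $j$, the index $m(j)$ maximizing $A_j \cap P'_{m(j)} \neq \emptyset$ is at most $r$, because $A_j \cap \{i\} = \emptyset$; therefore the refinement of $A$ with respect to $P'$ agrees with the refinement with respect to $P$, and is again a subset of $[d]\setminus\{i\}$ in every coordinate. By surrounding in $M$, this refinement lies in $M$, and since it avoids $i$, it lies in $M_{/i}$.

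I expect no real obstacle beyond the surrounding step: the only mild subtlety is that refinements are defined with respect to ordered partitions of the ambient ground set, so one must exhibit a compatible extension of the partition to $[d]$, which the observation above supplies by placing $\{i\}$ in any position (for concreteness, last). All four verifications are essentially bookkeeping, exactly parallel to the proof of deletion.
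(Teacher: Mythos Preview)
Your proof is correct and follows essentially the same approach as the paper's: both verify the four axioms directly by observing that types in $M_{/i}$ are types of $M$ avoiding $i$, and that none of the operations (elimination, refinement) can introduce the symbol $i$. Your treatment of the surrounding axiom is actually a bit more careful than the paper's---you explicitly extend an ordered partition of $[d]\setminus\{i\}$ to one of $[d]$ before invoking the axiom in $M$, whereas the paper simply remarks that refinement introduces no new symbols---but this is a minor expository difference, not a change of strategy.
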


\begin{proof}
Again, it is straightforward to verify each axiom; assume that $i=d$ for notational convenience. For $j\in [d-1]$, \textbf{j} is a type of $M$ 
not containing $d$, and hence is a type of $M_{/d}$, so the boundary axiom holds. When we eliminate between two types, no new symbols are 
introduced, so the elimination of two types of $M_{/d}$ in $M$ is again a type of $M$ not containing $d$, and hence is in $M_{/d}$. The comparability graph of two types in $M_{/d}$ is the same as their comparability graph in $M$, except for the isolated vertex $d$ (which is removed), and is therefore also acyclic. The surrounding axiom is trivial, as the process of refinement does not introduce any new symbols, and hence any refinement in $M$ of a type in $M_{/d}$ is again in $M_{/d}$. 
\end{proof}
\section{Three conjectures}
\label{sec:conjectures}

In this section, we explore one of the motivations for the study of tropical oriented matroids: their connection to triangulations of products of simplices. 
Realizable tropical oriented matroids (\emph{i.e.} tropical hyperplane arrangements) have a canonical bijection to regular subdivisions of products of 
simplices~\cite{DS}. Our main conjecture is the following:

\begin{conj}\label{anytriang}
There is a one-to-one correspondence between the subdivisions of the product of simplices $\Delta_{n-1} \times \Delta_{d-1}$ and the tropical oriented matroids with parameters $(n,d)$.
\end{conj}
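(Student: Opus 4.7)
The plan is to construct explicit, mutually inverse maps between $(n,d)$-tropical oriented matroids and polyhedral subdivisions of $\Delta_{n-1}\times\Delta_{d-1}$. The forward map $\Phi$ sends each type $A=(A_1,\ldots,A_n)$ to the face $F_A:=\mathrm{conv}\{(e_i,e_j):i\in[n],\,j\in A_i\}$ of the product simplex, and a tropical oriented matroid $M$ to the cell complex $\Phi(M):=\{F_A:A\in M\}$. The reverse map $\Psi$ reads off from each face $F$ of a subdivision $S$ that contains at least one vertex in each copy of $\Delta_{d-1}$ the type $A(F)_i:=\{j:(e_i,e_j)\in F\}$, and sets $\Psi(S):=\{A(F):F\in S\}$. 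In the realizable case this is precisely the bijection of \cite{DS} underlying Theorem~\ref{TP=THA}, so the task is to remove the regularity assumption.

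Section~\ref{sec:2d} will establish that $\Phi(M)$ is a subdivision of $\Delta_{n-1}\times\Delta_{d-1}$, so I focus on the reverse direction: checking that $\Psi(S)$ satisfies the four axioms for every subdivision $S$. Boundary holds because the face $\Delta_{n-1}\times\{e_j\}$ lies in every $S$ and encodes the type $\mathbf{j}$. Surrounding reduces to the statement that refining $A(F)$ along an ordered partition of $[d]$ selects the subface of $F$ supported on those $(i,j)$ with $j$ in the maximal block meeting $A(F)_i$; this subface is automatically in $S$ because subdivisions are closed under taking faces. Elimination between $A(F)$ and $A(F')$ in position $j$ is obtained by following the cells of $S$ crossed by a generic segment between interior points of $F$ and $F'$ and choosing the cell whose $j$-th coordinate is the full union $A(F)_j\cup A(F')_j$, which exists because the subdivision covers $\Delta_{n-1}\times\Delta_{d-1}$.

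The hardest axiom is Comparability, because the height-function argument used in Theorem~\ref{thm:arr.is.om} is unavailable for non-regular subdivisions. My plan is to argue combinatorially: a directed cycle in $CG_{A(F),A(F')}$ translates, via the Cayley trick, into a sequence of fine Minkowski cells of the mixed subdivision of $n\cdot\Delta_{d-1}$ whose relative positions are incompatible with the existence of a subdivision of the product simplex. Making this obstruction precise --- essentially asserting that every subdivision of $\Delta_{n-1}\times\Delta_{d-1}$ carries an acyclic ``direction flow'' between its cells --- is the main obstacle, and I expect it will require a separate structural lemma proved by induction on $n$ and $d$, or by a separating-flow argument internal to the bipartite graph encoding.

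Once the axioms are verified, mutual invertibility falls out of structural results already in place. Theorem~\ref{thm:vertices} shows a tropical oriented matroid is determined by its vertex types, and any polyhedral subdivision is determined by its maximal cells. Cell-by-cell checks then confirm that $\Phi(\Psi(S))=S$ on maximal cells and $\Psi(\Phi(M))=M$ on vertex types, using only the bijection between $(n,d)$-types and bipartite graphs on $[n]\sqcup[d]$ with no isolated vertex on the $[n]$ side. The two-dimensional triangulated case handled in Section~\ref{sec:2d} serves as a prototype for the general comparability argument, and a natural test bed for the conjectured structural lemma above.
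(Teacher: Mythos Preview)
The statement you are attempting is a \emph{conjecture} in the paper, not a theorem; the paper does not claim a proof. What the paper does prove is: (i) every tropical oriented matroid gives a subdivision (your map $\Phi$), and (ii) the reverse direction holds for \emph{triangulations} when $d=3$. The paper explicitly states that the remaining obstacle is the elimination axiom for general subdivisions.

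Your plan inverts this diagnosis, and that is where it goes wrong. You assert that comparability is ``the hardest axiom'' and that elimination follows by walking along a generic Euclidean segment in $\Delta_{n-1}\times\Delta_{d-1}$ and picking the cell whose $j$-th coordinate equals $A(F)_j\cup A(F')_j$. In fact comparability is easy: a directed cycle $j_1\to j_2\to\cdots\to j_k\to j_1$ in $CG_{A(F),A(F')}$, with the edge $j_a\to j_{a+1}$ coming from position $i_a$, yields the alternating bipartite cycle $j_1,i_1,j_2,i_2,\ldots,j_k,i_k,j_1$ whose edges lie alternately in $t_{A(F)}$ and $t_{A(F')}$. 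Condition~(3) of Theorem~\ref{prop:trees} then forces both graphs to contain the entire cycle, so every edge in $CG_{A(F),A(F')}$ along it is undirected, contradicting directedness. No height function, Cayley trick, or induction on $n,d$ is needed.

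By contrast, your elimination argument is the actual gap. A generic straight segment between interior points of $F$ and $F'$ has no reason to meet a cell with $j$-th coordinate exactly $A(F)_j\cup A(F')_j$; covering $\Delta_{n-1}\times\Delta_{d-1}$ only guarantees that each point of the segment lies in \emph{some} cell, not that the required union occurs. In the realizable case (Theorem~\ref{thm:arr.is.om}) the eliminating type comes from the \emph{tropical} sum $x\oplus y$, i.e.\ from a tropical line segment, and there is no straight-line analogue inside the product simplex. The paper's proof for $d=3$ handles elimination by an intricate case analysis of the transition rules between adjacent cells in the mixed subdivision of $n\Delta_2$, and this is precisely the part that has no known generalization to arbitrary $d$. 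Until you supply a genuine mechanism producing the eliminating type from the combinatorics of an arbitrary subdivision, the reverse direction of the conjecture remains open, just as the paper says.
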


In Section \ref{sec:2d} we will prove the backward direction, and the forward direction for triangulations of $\Delta_{n-1} \times \Delta_2$. Thus the conjecture is reduced to proving that subdivisions of $\Delta_{n-1} \times \Delta_{d-1}$ satisfy the elimination axiom.

The conjectural correspondence is as follows: 
Give the vertices of $\Delta_{n-1} \times \Delta_{d-1}$ the labels $(i, j)$ for $1 \leq i \leq n$ and $1 \leq j \leq d$. Given a triangulation $T$ of $\Delta_{n-1} \times \Delta_{d-1}$, we define the \textbf{type} of a face $F$ of $T$ to equal $(S_1, \ldots, S_n)$, where $S_i$ consists of those $j$ for which $(i, j)$ is a vertex of $F$. Consider the types of the faces which contain at least one vertex from each of the $n$ copies of $\Delta_{d-1}$; \emph{i.e.}, those whose types have no empty coordinates.
We conjecture that this is the collection of types of a tropical oriented matroid and, conversely, that every tropical oriented matroid arises in this way from a unique subdivision.

Consider, for example, the triangulation of the prism $\Delta_1 \times \Delta_2 = 12 \times 123$ shown in the left panel of Figure \ref{fig:cayley.trick}; it consists of the three tetrahedra $\{(1,1), (1,2), (1,3), (2,1)\},$ $\{(1,2), (1,3), (2,1), (2,3)\}$ and $\{(1,2), (2,1), (2,2), (2,3)\}$.
These tetrahedra have types $(123,1), (23,13),$ and $(2,123)$, and these are the vertices of a tropical oriented matroid with parameters $(2,3)$. Higher dimensional types of the tropical oriented matroid correspond to lower dimensional faces of the triangulation.

\medskip

A proof of Conjecture \ref{anytriang} would give us a form of duality for tropical oriented matroids. This duality exists for tropical hyperplane arrangements~\cite{DS}, since regular subdivisions of $\Delta_{n-1} \times \Delta_{d-1} \cong \Delta_{d-1}\times \Delta_{n-1}$ are in canonical bijection with $(n, d)$-hyperplane arrangements and $(d, 
n)$-hyperplane arrangements. One can then guess how this should extend to tropical oriented 
matroids in general.

\begin{defn}
A \textbf{semitype} (with parameters $(n, d)$) is given by an $n$-tuple of subsets 
of $[d]$, not necessarily nonempty. Given a tropical oriented matroid $M$, its \textbf{completion} $\widetilde{M}$ consists 
of all semitypes which result from types of $M$ by changing some subset of the 
coordinates to the empty set. Given a collection of semitypes, its \textbf{reduction} consists of all honest types contained in the 
collection. 
\end{defn}

\begin{defn}
Let $A$ be a semitype with parameters $(n, d)$. Then the \textbf{transpose} $A^T$ of $A$, a semitype with parameters $(d, n)$ (\emph{i.e.} a 
$d$-tuple of subsets of $[n]$), has $i\in A^T_j$ 
whenever $j\in A_i$. 
\end{defn}

Essentially, a type can be thought of as a 0-1 $n\times d$ matrix (or alternatively a bipartite graph with $n$ left vertices and $d$ right vertices), which can be interpreted as either an $n$-tuple of subsets of $[d]$, or a $d$-tuple of subsets of $[n]$. The transpose operation is simply the obvious map between the two.

\begin{defn}
Let $M$ be a tropical oriented matroid. Then the \textbf{dual} of $M$ is the reduction of the collection of 
semitypes given by transposes of semitypes in $\widetilde{M}$. 
\end{defn}

In other words, the dual is just the reinterpretation of the types of $M$ as $d$-tuples of subsets of $n$ instead of the other way around; the 
restriction and completion operations are required for purely technical reasons and are inessential to the intuition. By definition, if the dual $M^*$ of $M$ is indeed a tropical matroid, then clearly $M^{**}=M$.

As previously mentioned 
the dual of a realizable tropical oriented matroid is again a realizable tropical oriented matroid, and Conjecture \ref{anytriang} would show that this operation works in general.

\begin{conj}
The dual of a tropical oriented matroid with parameters $(n, d)$ is a tropical oriented matroid with parameters $(d, 
n)$.
\end{conj}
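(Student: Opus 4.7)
The plan is to reduce this conjecture to Conjecture~\ref{anytriang}, which the paragraph immediately preceding the dual conjecture explicitly identifies as the natural route. Granting the conjectured bijection between tropical oriented matroids with parameters $(n,d)$ and subdivisions of $\Delta_{n-1}\times\Delta_{d-1}$, the duality would follow from the canonical identification $\Delta_{n-1}\times\Delta_{d-1}\cong\Delta_{d-1}\times\Delta_{n-1}$ obtained by swapping the two factors. A subdivision $T$ of the first polytope is simultaneously a subdivision $T'$ of the second, and applying the bijection to $T'$ yields a tropical oriented matroid $M'$ with parameters $(d,n)$. The remaining work is to match the completion/reduction description of $M^*$ with the types-of-faces description of $M'$. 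The key geometric inputs are that every face of $T$ is contained in some top-dimensional cell, and that every top-dimensional cell has a vertex in each of the $n$ copies of $\Delta_{d-1}$ and in each of the $d$ copies of $\Delta_{n-1}$ (since such a cell projects surjectively onto each factor, its projected vertices must include every vertex of the target simplex). Together, these ensure that row-zeroing of a type of $M$ corresponds to passing to a sub-face of the associated cell of $T$, and conversely that a face of $T$ with all columns represented extends inside some top-dimensional cell to a face with all rows also represented.

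A direct axiomatic verification of the four axioms for $M^*$ is also conceivable, and the boundary and comparability axioms yield to relatively short arguments. For boundary, iterated elimination in $M$ of $\textbf{1},\ldots,\textbf{d}$ in a fixed position $i_0\in[n]$ produces a type whose $i_0$-th coordinate equals $[d]$; zeroing the remaining rows and transposing gives the desired tope $(i_0,\ldots,i_0)\in M^*$. For comparability, suppose $B^1,B^2\in M^*$ are witnessed by $A^1,A^2\in M$, and consider a putative directed cycle $i_1\to i_2\to\cdots\to i_k\to i_1$ in $CG_{B^1,B^2}$ with edge $s$ coming from position $j_s\in[d]$. This lifts to a closed walk $j_1\to j_k\to j_{k-1}\to\cdots\to j_2\to j_1$ in $CG_{A^1,A^2}$, where the walk edge at vertex $i_s$ arises from $j_s\in A^1_{i_s}$ and $j_{s-1}\in A^2_{i_s}$. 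A direct check shows that whenever an edge of the original cycle was directed, at least one edge of the lifted walk is directed as well, producing a directed cycle in $CG_{A^1,A^2}$ and contradicting the comparability axiom in $M$.

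The main obstacles in a direct proof are the surrounding and elimination axioms, both of which are essentially ``column-wise'' versions of operations on $M$. Elimination in $M^*$ at position $j\in[d]$ requires producing $A^*\in M$ whose $j$-th column (the subset $\{i:j\in A^*_i\}\subseteq[n]$) equals the union of the corresponding columns of $A^1$ and $A^2$, while each other column is one of those two columns or their union. This column-wise elimination does not follow from the stated row-wise elimination axiom in any obvious way; proving it directly would seem to require either a delicate inductive argument or an appeal to intermediate structural results such as the topes-and-vertices characterizations of Section~\ref{sec:properties}. The surrounding axiom in $M^*$ exhibits the same asymmetry, demanding that refinement by ordered partitions of $[n]$ be interpreted as a sequence of operations on types in $M$. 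These column-wise properties are precisely the transpose symmetry that Conjecture~\ref{anytriang} would supply for free, confirming that the triangulation route is the cleanest plan of attack and that the true obstacle is proving the subdivision bijection itself.
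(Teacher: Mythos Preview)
The statement you are addressing is a \emph{conjecture} in the paper, not a theorem; the paper offers no proof of it. What the paper does say is exactly the reduction you describe: ``A proof of Conjecture~\ref{anytriang} would give us a form of duality for tropical oriented matroids,'' and ``Conjecture~\ref{anytriang} would show that this operation works in general.'' So your first paragraph is not a new argument but a restatement of the paper's own heuristic, and it remains conditional on an open conjecture. In that sense there is nothing to compare: your reduction is the same one the authors intend, and neither they nor you supply an unconditional proof.

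Your second and third paragraphs go somewhat beyond the paper by attempting direct verification of individual axioms for $M^*$. The boundary argument is correct: iterated elimination of $\mathbf{1},\ldots,\mathbf{d}$ at a fixed position $i_0$ does produce a type with $i_0$-th coordinate $[d]$, and zeroing the other rows yields the required boundary type of $M^*$. The comparability argument is also essentially correct; one should note that the vertices $i_1,\ldots,i_k$ of the original cycle all lie outside both zeroing sets (since each $i_s$ appears in some $B^1_{j_s}$ and some $B^2_{j_{s-1}}$), so the translation between $B$-membership and $A$-membership is clean, and the lifted closed walk in $CG_{A^1,A^2}$ inherits at least one directed edge. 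Under the paper's definition of directed cycle (which does not require distinct intermediate vertices), this suffices.

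Your diagnosis that surrounding and elimination for $M^*$ are the genuine obstacles is accurate and consistent with the paper's own emphasis: throughout Sections~\ref{sec:conjectures} and~\ref{sec:2d} the elimination axiom is repeatedly singled out as ``the substantial one.'' But you should be explicit that what you have written is a proof \emph{strategy} together with two partial verifications, not a proof. As it stands the conjecture remains open, and your reduction does not change that status; it simply confirms that duality sits downstream of Conjecture~\ref{anytriang}, which is precisely where the paper already places it.
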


Note that as in ordinary matroid theory, the operations of deletion and contraction are dual to each other.

\medskip

A proof of Conjecture \ref{anytriang} would also give us a topological representation theorem for tropical oriented matroids, as follows. 

\begin{defn}
A \textbf{tropical pseudohyperplane} is a subset of $\TP^{d-1}$ which is PL-homeomorphic to a tropical hyperplane.
\end{defn}

\begin{conj}\label{toprep}\emph{(Topological representation theorem.)}
Every tropical oriented matroid can be realized by an arrangement of tropical pseudohyperplanes.
\end{conj}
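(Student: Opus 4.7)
The plan is to deduce this conjecture from Conjecture \ref{anytriang}, as the authors explicitly suggest just before stating it. Assume the bijection has been established, so a tropical oriented matroid $M$ with parameters $(n,d)$ corresponds to a (possibly non-regular) subdivision $S$ of $\Delta_{n-1}\times \Delta_{d-1}$. In the realizable case, $S$ is regular, and the construction of Theorem~\ref{TP=THA} produces the corresponding tropical hyperplane arrangement whose types are precisely the types of $M$. For the non-realizable case, I would carry out a purely topological version of the same construction.

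First, I would apply the Cayley trick to translate $S$ into a mixed subdivision $\widetilde{S}$ of the dilated simplex $n\cdot \Delta_{d-1}$. Each cell of $\widetilde{S}$ carries a Minkowski decomposition $F_1 + \cdots + F_n$ with $F_i$ a face of the $i$-th copy of $\Delta_{d-1}$, and the type $(A_1,\ldots,A_n)$ of the corresponding face of $S$ is encoded by setting $A_i$ equal to the vertex set of $F_i$. Second, from $\widetilde{S}$, for each $i\in [n]$, I would build a pseudohyperplane $H_i\subset \TP^{d-1}$ as the union of those (relatively open) cells of $\widetilde{S}$ on which $F_i$ has positive dimension. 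In the regular case, $H_i$ is literally the tropical hyperplane with apex $-v_i$, and its $d$ sectors correspond to the $d$ vertices of $\Delta_{d-1}$; in general the same combinatorial recipe should define a PL-homeomorphic subset.

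Third, I would verify two things: (a) each $H_i$ is PL-homeomorphic to a standard tropical hyperplane in $\TP^{d-1}$, and (b) a point (or face) of the arrangement $\{H_1,\ldots,H_n\}$ has type $(A_1,\ldots, A_n)$ precisely when it lies in the intersection of the sectors indexed by $A_1,\ldots,A_n$. Claim (b) is essentially tautological from the construction, since by definition of $\widetilde{S}$ a point lying in a cell with decomposition $F_1+\cdots+F_n$ sits in the sector of $H_i$ indexed by the vertices of $F_i$. So the genuine content is in claim (a).

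The main obstacle is twofold. The first and most fundamental difficulty is Conjecture~\ref{anytriang} itself, which the paper has already reduced to verifying the elimination axiom for subdivisions. The second, assuming that bijection, is establishing that the $H_i$ produced by Step 2 are actually PL-homeomorphic to, and not merely combinatorially equivalent to, standard tropical hyperplanes. One should expect genuine PL-topological subtleties here, closely parallel to the technicalities that make the Folkman-Lawrence topological representation theorem for ordinary oriented matroids hard: the sphere-like fans must be assembled consistently with all pairwise and higher intersections, and care is needed to ensure that the combinatorial pseudohyperplane supports a compatible PL structure globally on $\TP^{d-1}$. A natural strategy is to induct on $n$, deleting one pseudohyperplane at a time and using a PL ``sweeping'' argument to extend an existing representation of $M_{\setminus n}$ by adding $H_n$ in a manner compatible with the cells of $\widetilde{S}$ involving the $n$-th summand.
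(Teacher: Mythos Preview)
Your high-level plan---assume Conjecture~\ref{anytriang}, pass to a mixed subdivision $\widetilde{S}$ of $n\Delta_{d-1}$ via the Cayley trick, and build the pseudohyperplanes inside that picture---matches the paper's sketch. However, your Step~2 does not actually produce a pseudohyperplane. The set of cells of $\widetilde{S}$ whose $i$-th Minkowski summand $F_i$ has positive dimension is a \emph{full-dimensional} subcomplex of $n\Delta_{d-1}$, not a codimension-one object. In the running $(n,d)=(2,3)$ example with maximal cells of types $(123,1)$, $(23,13)$, $(2,123)$, your $H_1$ would be the union of the triangle $(123,1)$ and the rhombus $(23,13)$, a two-dimensional region; a tropical pseudoline must be one-dimensional. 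Your claim that ``in the regular case, $H_i$ is literally the tropical hyperplane with apex $-v_i$'' is therefore false for your construction as written.

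The mechanism the paper supplies, and which your proposal is missing, is the \emph{mixed Voronoi subdivision}. Inside each Minkowski cell $S_1+\cdots+S_n$ of $\widetilde{S}$ one further subdivides into pieces $R_1+\cdots+R_n$, where $R_i$ ranges over the Voronoi regions of the simplex $S_i$ (region $j$ being the set of points of $S_i$ closest to vertex $j$). It is the \emph{new} codimension-one walls introduced by this refinement---the boundaries between Voronoi regions of $S_i$, summed with the remaining factors---that glue together across the cells of $\widetilde{S}$ to form the $i$-th tropical pseudohyperplane, with its apex sitting at the barycenter of the unique simplex cell whose $i$-th summand is all of $\Delta_{d-1}$. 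In the example above, $H_1$ becomes the Y-shape inside the triangle $(123,1)$ extended by the single transverse segment bisecting the rhombus $(23,13)$, which is visibly a tropical line. This is a direct, global construction; it replaces your proposed inductive PL sweeping argument, and once it is in place the verification that each $H_i$ is PL-homeomorphic to a standard tropical hyperplane, and that the induced sectors recover the types of $M$, is a local combinatorial check inside each mixed cell rather than a Folkman--Lawrence-style extension problem.
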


Let us sketch the idea of a proof of Conjecture \ref{toprep} assuming that Conjecture \ref{anytriang} is true.
The first step is to apply the Cayley trick to biject a triangulation of a product of simplices $\Delta_{n-1}\times \Delta_{d-1}$ to a mixed 
subdivision of the dilated simplex $n\Delta_{d-1}$. 

\begin{figure}[h]
\centering
\includegraphics[height=4cm]{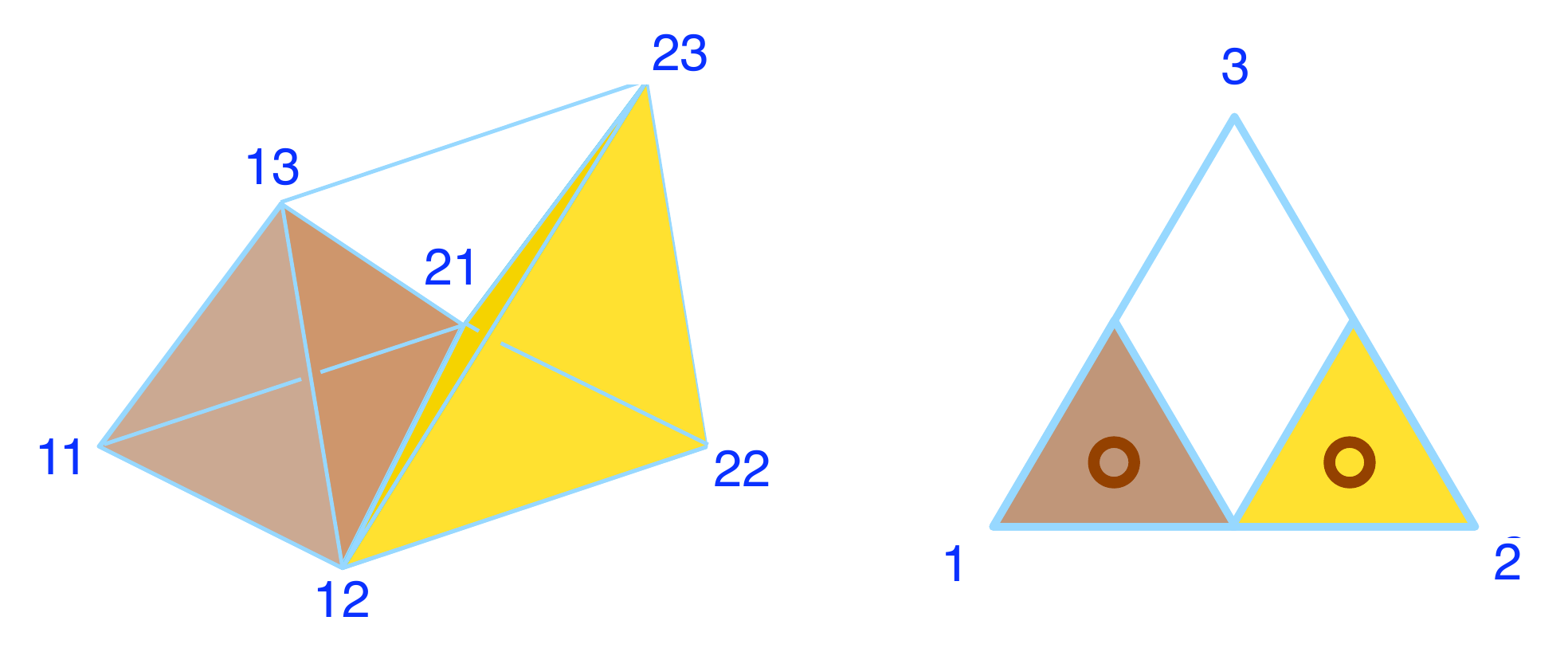}
\caption{The Cayley trick.} \label{fig:cayley.trick}
\end{figure}

This process is detailed in~\cite{Santos}; it is a standard trick in polyhedral geometry. Let us briefly illustrate this correspondence for
the tiling of $\Delta_1 \times \Delta_2$ shown in Figure \ref{fig:cayley.trick}: the tetrahedra have types 
$(123,1), (23,13),$ and $(2,123)$, so they get mapped to the Minkowski sums $123+1$, $23+13$, and $2+123$. These pieces form a mixed subdivision of the triangle $2\Delta_2$, as shown.


The second step is to consider the \textbf{mixed Voronoi subdivision} of this mixed subdivision of $n \Delta_{d-1}$, defined as follows. The \textbf{Voronoi subdivision} of a $k$-simplex divides it into $k$ regions, where region $i$ consists of the points in the simplex for which $i$ is the closest vertex. 
We subdivide each cell $S_1+S_2+\cdots+S_n$ in our mixed subdivision into the regions $R_1+R_2+\cdots+R_n$, where $R_i$ is a region in the Voronoi subdivision of $S_i$. For example, in two dimensions the finest mixed cells we can get are a triangle and a rhombus, and their mixed Voronoi subdivisions are shown in Figure \ref{rhombi}.

The lower-dimensional faces introduced by this mixed Voronoi subdivision will all fit together to form a 
tropical pseudohyperplane arrangement; each simplex cell in the mixed subdivision corresponds to one apex of a tropical pseudohyperplane, and the 
other cells dictate how these pseudohyperplanes propagate throughout the diagram. 
This process is shown in Figure~\ref{voronoi} for a mixed subdivision of $4 \Delta_2$.\footnote{Note that tropical lines are usually drawn with angles of $90^{\circ}, 135^{\circ}, 135^{\circ}$, while here they appear, more symmetrically, with three angles of $120^{\circ}$.} 
To recover the collection of types, since each 
of the $n$ tropical pseudohyperplanes divides the figure into $d$ canonically indexed sectors, one can 
simply take the types of all regions (of all dimensions) in the tropical pseudohyperplane arrangement. These are precisely the types of the triangulation we started with.

\begin{figure}[h]
\begin{center}\includegraphics[height=4cm]{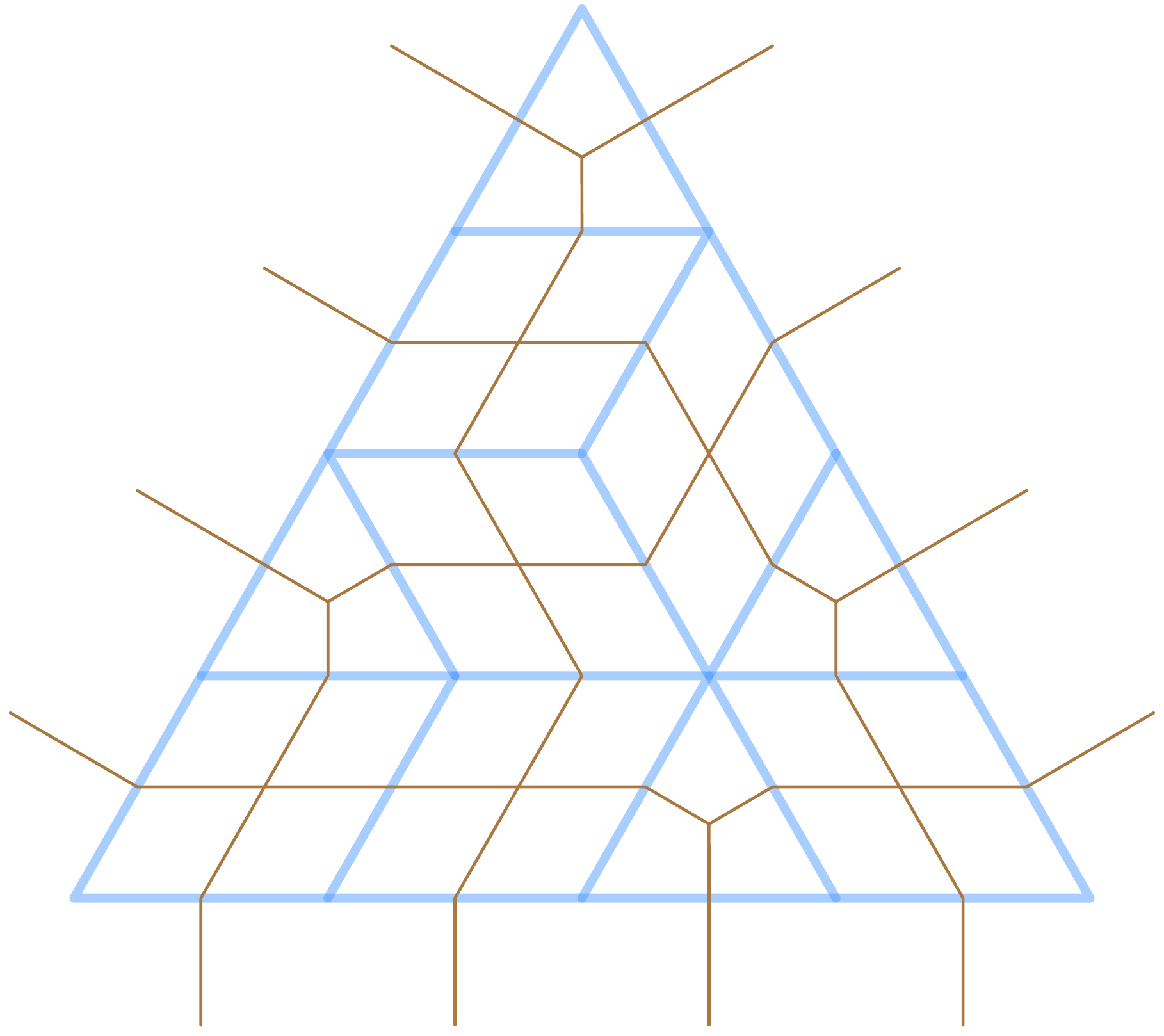}\end{center}
\caption{\label{voronoi} How to obtain a tropical pseudohyperplane arrangement from a Cayley trick picture of a triangulation (here of $\Delta_3\times\Delta_2$.)}
\end{figure}

For an example in three dimensions, the top panel of Figure~\ref{pseudohyps-3d} shows a mixed subdivision of $2 \Delta_3$ (which corresponds to a triangulation of $\Delta_1 \times \Delta_3$)  and the Voronoi mixed subdivision of each one of its four cells.
The bottom panel shows how the lower-dimensional faces introduced by this mixed Voronoi subdivision fit together to form two tropical pseudohyperplanes, shown in different colors. As one should expect, these two pseudohyperplanes intersect in a tropical pseudoline, which is dotted in the diagram.

\begin{figure}[h]
\begin{center}\includegraphics[height=6cm]{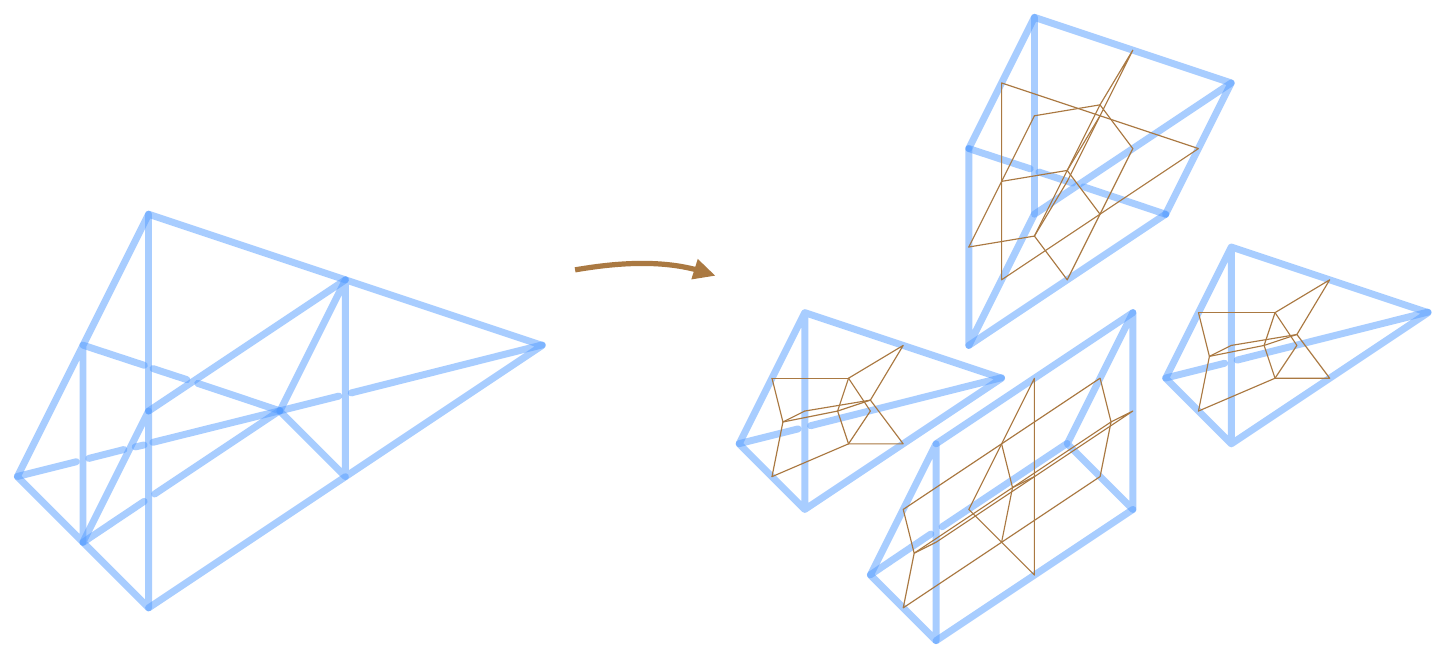}\end{center}
\bigskip
\bigskip
\begin{center}\includegraphics[height=8cm]{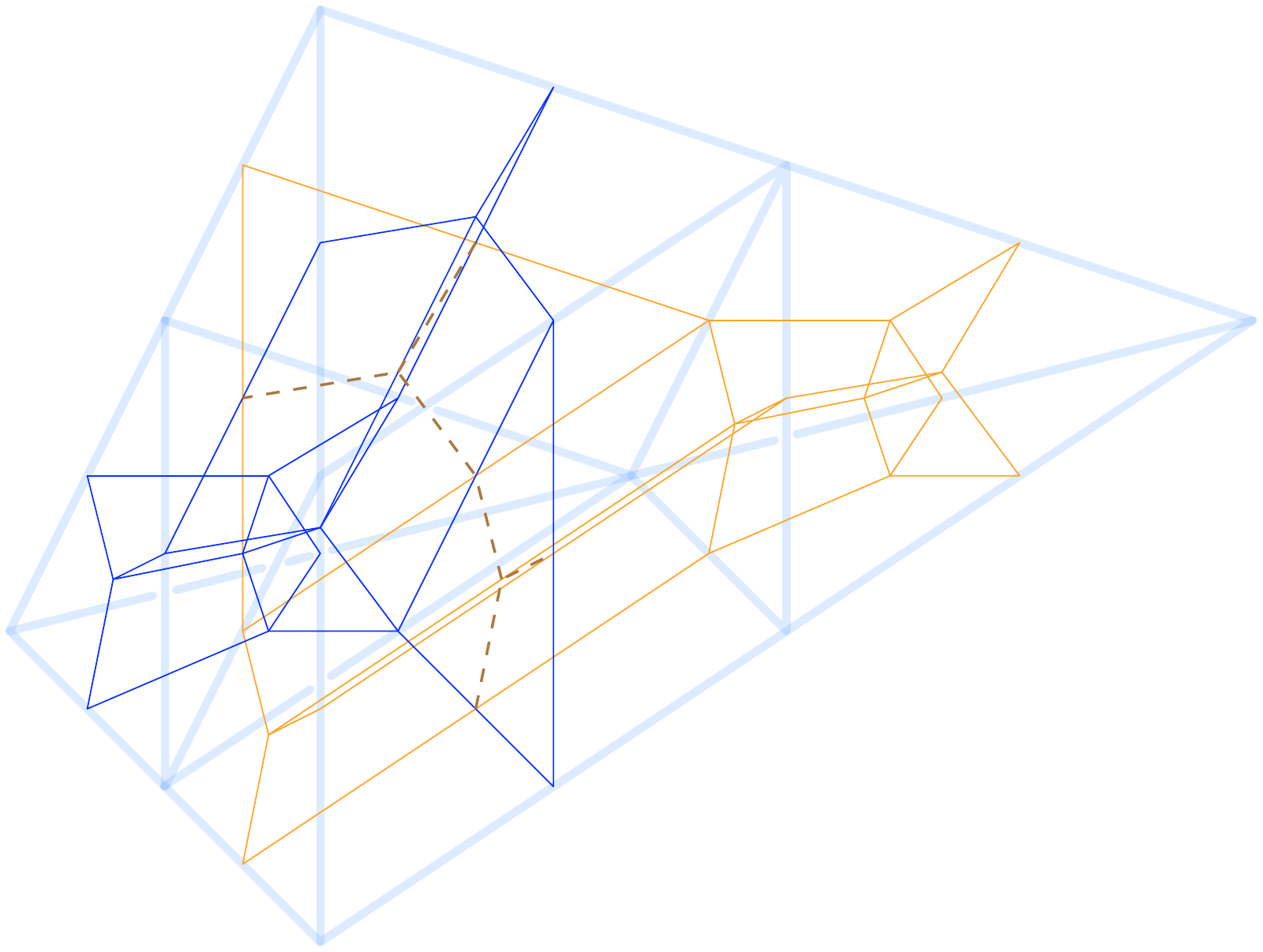}\end{center}
\caption{\label{pseudohyps-3d} A mixed subdivision of $2 \Delta_3$, the Voronoi subdivision of each cell, and the resulting tropical pseudohyperplane arrangement in $\TP^3$.}
\end{figure}

\medskip

We conclude this section with two possible applications of these ideas.

Firstly, one should be able to use the topological representation theorem for tropical oriented matroids to 
prove that the triangulations of $\Delta_{n-1} \times \Delta_{d-1}$ are flip-connected. It is a natural 
question to ask whether the triangulations of a polytope $P$ can all be reached from one another by a series 
of certain local moves, known as flips. This is not true in general \cite{Santosflips}, and there are only a 
few polytopes which are known to be flip-connected, including convex polygons, cyclic polytopes \cite{R}, and 
products $\Delta_{n-1} \times \Delta_2$. \cite{Santos} In contrast, the analogous statement is true for 
regular triangulations, due to the fact that there is a continuous model for them.  If Conjecture 
\ref{toprep} 
is true, then tropical pseudohyperplane arrangements constitute a continuous model for the triangulations of 
$\Delta_{n-1} \times \Delta_{d-1}$, and moving around the parameter space of tropical pseudohyperplane 
arrangements should give a proof of their flip connectivity.

Secondly, tropical oriented matroids may give a proof of a conjecture describing the possible locations of 
the simplices in a fine mixed subdivision of $n \Delta_{d-1}$. In studying the Schubert calculus of the flag 
manifold, the first author and Sara Billey \cite{AB} described the matroid ${\mathcal T}_{n,d}$ of the line 
arrangement 
determined by intersecting $d$ generic flags in $\RR^n$. They showed that this matroid is closely related to 
the fine mixed subdivisions of $n \Delta_{d-1}$: every such subdivision has exactly $n$ simplices, which are 
a basis of the matroid ${\mathcal T}_{n,d}$. In the converse direction, they conjectured that every basis 
comes from such a subdivision, and they proved it for $d=3$. To prove this statement, one would need a good 
way of constructing subdivisions. This may be approached by taking advantage of the continuous model of 
tropical pseudohyperplane arrangements, or by developing a toolkit for building tropical oriented matroids, 
in analogy with the multiple constructions available in ordinary matroid theory.

\section{Tropical oriented matroids and subdivisions of $\Delta_{n-1} \times \Delta_{d-1}$.}\label{sec:2d}

In this section we make progress towards Conjecture \ref{anytriang}, which relates tropical oriented matroids and subdivisions of $\Delta_{n-1} \times \Delta_{d-1}$. We prove one direction of the conjecture for all $n$ and $d$, and the other direction in the special case of triangulations of $\Delta_{n-1} \times \Delta_2$. 

To do so, let us review a combinatorial characterization of these subdivisions.
Each vertex of $\Delta_{n-1} \times \Delta_{d-1}$ corresponds to an edge of the bipartite graph
$K_{n,d}$. The vertices of each subpolytope in $\Delta_{n-1} \times
\Delta_{d-1}$ determine a subgraph of $K_{n,d}$. Each
subdivision of $\Delta_{n-1} \times \Delta_{d-1}$ is then
encoded by a collection of subgraphs of $K_{n,d}$. Figure
\ref{fig:trees} shows the three trees that encode the
triangulation of Figure \ref{fig:cayley.trick}.

\begin{figure}[h]
\centering
\includegraphics[height=2.5cm]{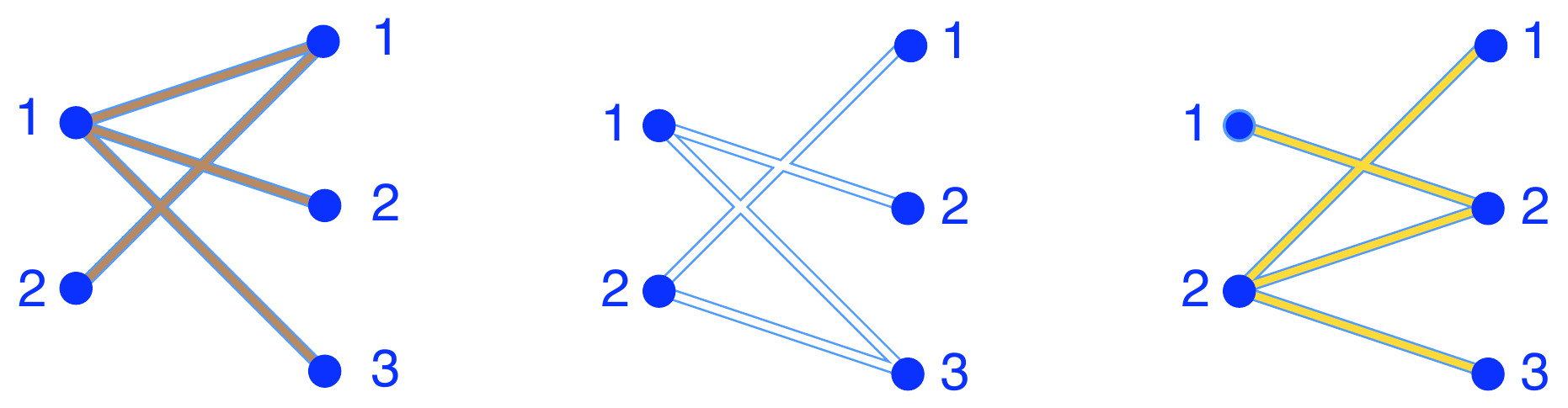}
\caption{The trees corresponding to the triangulation of Figure
\ref{fig:cayley.trick}.} \label{fig:trees}
\end{figure}

\begin{thm}\cite{AB, Santostriangs}\label{prop:trees}
A collection of subgraphs $t_1, \ldots, t_k$ of $K_{n,d}$ encodes
a subdivision of $\Delta_{n-1} \times \Delta_{d-1}$ if and only
if:
\begin{enumerate}
\item Each $t_i$ spans $K_{n,d}$.

\item For each $t_i$ and each maximal disconnected subgraph $s_i$ of $t_i$,
either $s_i$ has an isolated vertex or there is another $t_j$ containing $s_i$. 
%

\item If there are two subgraphs $t_i$ and $t_j$ and a cycle
$C$ of $K_{n,d}$ which alternates between edges of $t_i$ and edges
of $t_j$, then both $t_i$ and $t_j$ contain $C$.
\end{enumerate}
\end{thm}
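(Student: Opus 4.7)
The plan is to base both directions on the dimension formula
\[
\dim \operatorname{conv}\{(e_i,f_j) : (i,j) \in E(g)\} = |V(g)| - c(g) - 1
\]
for a subgraph $g$ of $K_{n,d}$, where $V(g)$ is the set of bipartite vertices incident to some edge of $g$ and $c(g)$ is its number of connected components. This formula holds because every affine relation among the product-vertices $(e_i,f_j)$ is generated by alternating-cycle relations $\sum_{\text{odd}}(e_a+f_b) = \sum_{\text{even}}(e_a+f_b)$, whose number of independent instances equals the cyclomatic number $|E(g)| - |V(g)| + c(g)$.

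For the forward direction, assume $\{t_1,\ldots,t_k\}$ is the collection of maximal cells of a subdivision. Each cell is $(n+d-2)$-dimensional, so the formula forces $|V(t_i)| = n+d$ and $c(t_i) = 1$, giving (1). A facet of $\operatorname{conv}(t_i)$ has dimension $n+d-3$ and is the convex hull of some subgraph $s_i\subseteq t_i$ with $|V(s_i)| - c(s_i) = n+d-2$; the only options are $(|V|,c)=(n+d-1,1)$ (a boundary facet, where $s_i$ omits a vertex of $K_{n,d}$, i.e.\ has an ``isolated vertex'') or $(n+d,2)$ (an interior facet, forced to be shared with a unique neighbor $t_j\supseteq s_i$), giving (2). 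For (3), an alternating cycle $C$ with odd edges in $t_i$ and even edges in $t_j$ yields a common interior point via the alternating-sum relation; in a polyhedral subdivision this point lies in a common face of both cells, and the dimension formula then forces both $t_i$ and $t_j$ to contain every edge of $C$.

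For the backward direction, I would form the candidate complex whose maximal cells are the $\operatorname{conv}(t_i)$ together with all of their faces, and verify it is a polyhedral subdivision of $\Delta_{n-1}\times\Delta_{d-1}$. The key structural input is a circuit lemma for $K_{n,d}$: every minimal affine dependence among vertices of the product is supported on an alternating cycle. Combined with condition (3), this upgrades to $\operatorname{conv}(t_i)\cap\operatorname{conv}(t_j) = \operatorname{conv}(t_i\cap t_j)$, which is then a common face of both cells. Condition (2) is used for the covering/gluing step: every interior facet of a cell is shared with exactly one neighbor, so the complex has no interior boundary, and induction on dimension, using that the boundary of $\Delta_{n-1}\times\Delta_{d-1}$ is a union of smaller products $\Delta_{I}\times\Delta_{J}$ to which the same theorem applies, shows that $\bigcup \operatorname{conv}(t_i) = \Delta_{n-1}\times\Delta_{d-1}$.

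The main obstacle is the backward direction, specifically the passage from the combinatorial condition (3) to the geometric intersection identity $\operatorname{conv}(t_i)\cap\operatorname{conv}(t_j) = \operatorname{conv}(t_i\cap t_j)$. The circuit lemma is the bridge: any ``excess'' intersection point would produce a minimal affine dependence, hence an alternating cycle, which by (3) is already supported in both $t_i$ and $t_j$. Once this intersection property is established, the covering and gluing argument from condition (2) is the standard polyhedral-complex routine.
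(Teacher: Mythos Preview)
The paper does not actually prove this theorem: it is quoted from \cite{AB} and \cite{Santostriangs}, and the only accompanying text is a one-sentence heuristic for each condition (``(1) guarantees that the pieces are full-dimensional; (2) says that if we walk out of one of the pieces through one of its facets, we will either walk out of the polytope or into another piece; (3) guarantees that the pieces intersect face-to-face''). So there is nothing in the paper to compare your argument against at the level of detail you are attempting.

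That said, your outline is the standard one and matches the intuition the paper records. A couple of places deserve tightening if you intend this as an actual proof rather than a plan. First, in your treatment of (2) you identify facets of $\operatorname{conv}(t_i)$ with subgraphs $s_i\subset t_i$ satisfying $|V(s_i)|-c(s_i)=n+d-2$, but not every such subgraph is a facet: you still need that the corresponding vertex set is cut out by a supporting hyperplane of $\operatorname{conv}(t_i)$, and conversely that every facet arises this way. This is true, but it uses the special structure of products of simplices (every face of $\Delta_{n-1}\times\Delta_{d-1}$ is itself a product $\Delta_I\times\Delta_J$), and it is worth saying explicitly. Second, your covering argument (``no interior boundary, then induction on dimension'') hides the usual topological step: from ``every interior facet is shared'' you want to conclude that the union of the cells is all of $\Delta_{n-1}\times\Delta_{d-1}$, and this needs either a connectedness/propagation argument or a volume count. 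The references the paper cites carry this out via pseudo-manifold/oriented-matroid machinery; your sketch is compatible with that but is not yet a proof on its own.
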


For triangulations, in (1) we need each $t_i$ to be a spanning tree, in (2) the subgraph $s_i$ can be $t_i-e$ for any edge $e$ of $t_i$, and in (3) one cannot have a cycle alternating between $t_i$ and $t_j$. 

Intuitively, (1) guarantees that the pieces of the subdivision are full-dimensional; (2) says that if we walk out of one of the pieces through one of its facets, we will either walk out of the polytope or into another piece of the subdivision; (3) guarantees that the pieces intersect face-to-face.

\begin{thm}
The types of the vertices of a tropical oriented matroid $M$ with parameters $(n,d)$ describe a subdivision of $\Delta_{n-1} \times \Delta_{d-1}$.
\end{thm}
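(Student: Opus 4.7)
The plan is to associate to each vertex $A=(A_1,\ldots,A_n)$ of $M$ the bipartite subgraph $t_A\subseteq K_{n,d}$ whose edges are $\{(i,j):j\in A_i\}$, and to verify the three conditions of Theorem~\ref{prop:trees} for the collection $\{t_A : A \text{ a vertex of } M\}$. Condition (1) is immediate: each $A_i$ is nonempty, so every left vertex of $K_{n,d}$ is incident to some edge of $t_A$; and since $A$ is a vertex, $G_A$ is connected, forcing each $j\in[d]$ to appear in some $A_i$ and hence every right vertex of $K_{n,d}$ to be covered. For condition (3), given two vertices $A,B$ of $M$ and a $K_{n,d}$-cycle $i_1,j_1,i_2,j_2,\ldots,i_k,j_k,i_1$ with odd-indexed edges in $t_A$ and even-indexed edges in $t_B$, the definitions give $j_l\in A_{i_l}$ and $j_{l-1}\in B_{i_l}$ at position $i_l$ (indices mod $k$). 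Each such position contributes a directed edge $j_l\to j_{l-1}$ to the comparability graph $CG_{A,B}$, so traversing the whole cycle produces a directed cycle $j_1\to j_k\to\cdots\to j_2\to j_1$ in $CG_{A,B}$. By the comparability axiom this cycle must consist entirely of undirected edges, which means $j_l,j_{l-1}\in A_{i_l}\cap B_{i_l}$ for all $l$, so every edge of the original $K_{n,d}$-cycle lies in both $t_A$ and $t_B$.

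The heart of the proof is condition (2). Let $A$ be a vertex of $M$ and $s=t_A\setminus E$ a maximal disconnected subgraph, where the bond $E$ induces a vertex bipartition $V_1\sqcup V_2=(L_1\cup R_1)\sqcup(L_2\cup R_2)$ with each piece inducing a connected subgraph of $s$. If some vertex is isolated in $s$ we are done, so assume not; then $A_i\cap R_k\neq\emptyset$ whenever $i\in L_k$, and every $j\in R_k$ appears in some $A_i$ with $i\in L_k$. The strategy is to construct, via the elimination axiom applied iteratively, a type $C\in M$ such that $C_i\supseteq A_i\cap R_{k(i)}$ for each $i\in L_{k(i)}$ but with $C$ not coordinatewise contained in $A$. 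By Lemma~\ref{subset-refine}, $C$ is then not a refinement of $A$, so by Theorem~\ref{thm:vertices} $C$ is a refinement of some vertex $B\neq A$, which must satisfy $t_B\supseteq t_C\supseteq s$ as required.

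The construction of $C$ is the main obstacle. A natural first attempt is to pick a cut edge $(i_0,j_0)\in E$ (say with $i_0\in L_1$ and $j_0\in R_2$) and eliminate $A$ against a well-chosen boundary type $\mathbf{j}$ at a position in $L_2$, then iterate across positions in the spirit of the spanning-tree-growth argument in the proof of Theorem~\ref{thm:vertices}. The subtle part is controlling which branch the elimination axiom selects at each coordinate so that $A_i\cap R_{k(i)}$ is preserved throughout, and handling degenerate positions---for instance, when some $A_i$ already contains all of $R_{3-k(i)}$, leaving no room for a boundary-type elimination to introduce a new element crossing the cut. Overcoming these degenerate cases, perhaps by exploiting the connectivity of $G_A$ near the cut or by combining elimination with refinement via the surrounding axiom, is where most of the work lies.
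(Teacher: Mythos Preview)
Your setup is right and matches the paper: encode each vertex $A$ by the bipartite graph $t_A\subseteq K_{n,d}$ and verify conditions (1)--(3) of Theorem~\ref{prop:trees}. Your arguments for (1) and (3) are fine; the paper does the same, only more tersely (for (3) it simply says an alternating cycle in $K_{n,d}$ produces a cycle in $CG_{A,B}$).

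The gap is condition (2). You correctly see that the task is to produce, by iterated elimination, a type $C$ with $t_C\supseteq s$, and you correctly anticipate that a spanning-tree-growth argument in the spirit of Theorem~\ref{thm:vertices} is what is needed---but you do not carry it out, and you explicitly flag ``the construction of $C$ is the main obstacle.'' That construction is exactly what Case~2 of the proof of Theorem~\ref{thm:vertices} already does. The paper's proof simply invokes that case: writing the $n$-tuple corresponding to $s$ as $B$, with $G_B$ having two components (say $R_1$ and $R_2$), Case~2 builds a type that agrees with $B$ on all $R_1$-positions and adds $R_1$-elements to the $R_2$-positions; the last elimination step guarantees some coordinate contains elements of both $R_1$ and $R_2$, so the result has connected $G$ and is itself a vertex. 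The ``degenerate cases'' and ``branch control'' you worry about are precisely what the comparability-plus-spanning-tree mechanism in that proof handles.

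There is also a second idea in the paper that you are missing and that your outline would need. You aim for a single $C$ with $C\not\subseteq A$, so that any vertex refining $C$ is $\neq A$. But the Case~2 output need not satisfy $C\not\subseteq A$: it is a vertex containing $B$, and such a vertex can perfectly well be a refinement of $A$. The paper avoids this by running the construction from \emph{both} sides, obtaining two vertices $C$ and $C'$, where $C$ agrees with $B$ on $R_1$-positions and $C'$ agrees with $B$ on $R_2$-positions. These are forced to be distinct, so at least one of them differs from $A$ and supplies the required second $t_j\supseteq s$. If you want to salvage your single-$C$ route you would have to argue separately why $C\not\subseteq A$ can be arranged; the two-sided trick makes this unnecessary.
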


\begin{proof}
Let $t_A$ be the subgraph of $K_{n,d}$ determined by a type $A$ of $M$; this graph is related to the subgraph 
$G_A$ of $[d]$ of Definition \ref{def:G_A} as follows: vertices $i$ and $j$ are connected in the graph $G_A$ 
of a type $A$ if and only if, in the bipartite graph $t_A$, there is a vertex in $[n]$ connected by an edge 
to both $i$ and $j$ in $[d]$. Now we check the conditions of Theorem \ref{prop:trees}. 

(1) Since a type $A$ has no empty coordinates, it is clear that $G_A$ is connected if and only if $t_A$ spans $K_{n,d}$.

(2) Let $t_i$ and $s_i$ correspond to types $A$ and $B$. Since $s_i$ is maximal disconnected, $G_B$ has exactly two connected components. If one of them is an isolated vertex, we are done. Otherwise, we are in Case 2 of the proof of Theorem \ref{thm:vertices}. If, for instance, $B=(12,46,256,135,34,78,79,7)$, then that proof constructs a type $C=(12,46,256,135,34,*,*,*)$ containing $B$ which is identical to $B$ in the coordinates corresponding to the first component $[6]$, and adds some elements of $[6]^*$ to the coordinates corresponding to the second component $\{7,8,9\}$. This makes $G_C$ connected, so $C$ is a vertex. In the same way, we could have constructed a vertex $C'$ containing $B$ of the form $C'=(*,*,*,*,*,78,79,7)$. These types $C$ and $C'$ are distinct, so one of them gives us the second subgraph $t_j$ containing $s_i$. 

(3) A cycle $C$ of $K_{n,d}$ which alternates between edges of $t_A$ and $t_B$ would give rise to a cycle in the comparability graph $CG(A,B)$ involving the $[d]$-vertices of $C$.
\end{proof}

\begin{thm}
The types of the full-dimensional simplices of a (possibly nonregular) triangulation of $\Delta_{n-1} \times \Delta_2$ are the vertices of a tropical oriented matroid.
\end{thm}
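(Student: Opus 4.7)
My plan is to define $M$ as the collection of all $(n,3)$-types $A=(A_1,\ldots,A_n)$ with nonempty coordinates satisfying $A_i\subseteq V_i$ for every $i$, for some top-dimensional simplex of the triangulation $\mathcal{T}$ of type $V$. The vertex types of $M$ in the sense of Definition~\ref{def:G_A} will then be exactly the types of top-dim simplices: if $G_A$ is connected on $[3]$ and every $A_i$ is nonempty, then the associated bipartite subgraph $t_A$ of $K_{n,3}$ is connected and spanning, so $\sum|A_i|\geq n+2$; combined with $\sum|V_i|=n+2$ and $A\subseteq V$, this forces $A=V$.

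Boundary, Surrounding, and Comparability are short. For \emph{Boundary}, the face $\Delta_{n-1}\times\{q_j\}$ of $\Delta_{n-1}\times\Delta_2$ is automatically a face of $\mathcal{T}$ (being a simplex, the restriction of $\mathcal{T}$ to it is itself), and is contained in some top-dim simplex $V$ with $j\in V_i$ for all $i$, giving $\mathbf{j}\in M$. \emph{Surrounding} follows from $A'_i=A_i\cap P_{m(i)}\subseteq A_i\subseteq V_i$. For \emph{Comparability}, a minimal directed cycle $j_1\to j_2\to\cdots\to j_r\to j_1$ of $CG_{A,B}$ at positions $i_1,\ldots,i_r$ produces a closed walk $i_1-j_2-i_2-j_3-\cdots-i_r-j_1-i_1$ in $K_{n,3}$ whose edges alternate between $t_V$-edges $(i_s,j_s)$ and $t_W$-edges $(i_s,j_{s+1})$ (indices mod $r$); extracting a simple alternating sub-cycle contradicts condition~(3) of Theorem~\ref{prop:trees}, as neither spanning tree can contain a cycle.

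The main obstacle is \emph{Elimination}: given $A\subseteq V$, $B\subseteq W$ in $M$ and a position $j\in[n]$, I must produce $C\in M$ with $C_j=A_j\cup B_j$ and $C_k\in\{A_k,B_k,A_k\cup B_k\}$ for $k\neq j$. My plan is to pass to the Cayley trick, which realizes $\mathcal{T}$ as a mixed subdivision of the planar region $n\Delta_2$ whose $2$-dimensional cells are either triangles (a single coordinate equal to $\{1,2,3\}$) or rhombi (two coordinates being distinct $2$-element subsets of $[3]$). A key geometric input is that for each $j$, the face $\{p_j\}\times\Delta_2$ is a face of $\mathcal{T}$, so there is a top-dim simplex $U^{(j)}$ with $U^{(j)}_j=\{1,2,3\}$; consequently the candidate $C_j=A_j\cup B_j$ can always be supported at position $j$. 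I would realize $A,B$ by points $x,y$ in the interiors of their cells, trace the cells met by a generic straight segment from $y$ to $x$, and take $C$ to be the type (refined if necessary) of the cell whose $j$-th summand equals $A_j\cup B_j$. The hardest step will be verifying $C_k\in\{A_k,B_k,A_k\cup B_k\}$ for $k\neq j$: this needs careful control of how the $k$-th summand can change as we cross cell boundaries, using the restricted local structure of triangle and rhombus cells together with condition~(3) of Theorem~\ref{prop:trees} to exclude illicit transitions. If the direct geometric analysis becomes intricate, I would fall back on Theorem~\ref{topes-determine}, defining $C$ combinatorially and then verifying both that every total refinement of $C$ is a tope of $M$ and that $CG_{C,T}$ is acyclic for every tope $T$ of $M$.
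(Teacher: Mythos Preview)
Your treatment of Boundary, Surrounding, and Comparability is essentially the paper's: short arguments that translate the axioms into the tree condition of Theorem~\ref{prop:trees}. Your identification of the TOM-vertices with the top-dimensional simplices via the edge count is clean.

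The gap is in Elimination. Walking along a \emph{straight} segment in the mixed subdivision of $n\Delta_2$ does not mimic the realizable proof of Theorem~\ref{thm:arr.is.om}, which uses the point $x\oplus y$ on the \emph{tropical} line from $x$ to $y$. In the pseudohyperplane picture the $j$-th pseudohyperplane is only piecewise linear, so a Euclidean segment need not meet it in the right place: as you move from a cell with $j$-th summand $A_j$ to one with $B_j$, the transition rules allow the $j$-th coordinate to pass through the third element of $[3]$ rather than through $A_j\cup B_j$ (e.g.\ $\{1\}\to\{1,3\}\to\{3\}\to\{2,3\}\to\{2\}$ is consistent with the rules). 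Your observation that some simplex $U^{(j)}$ has $U^{(j)}_j=[3]$ does not place that cell on your segment, nor does it control the other coordinates. So the existence of the desired $C$ along the segment is exactly what is unproven. Your fallback to Theorem~\ref{topes-determine} is circular: that theorem presupposes that $M$ is already a tropical oriented matroid, hence in particular that Elimination holds; and in any case, with your definition of $M$, membership of a candidate $C$ is checked by exhibiting a top simplex containing it, not via the tope criterion.

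What the paper does instead is stay in the discrete world: it reduces to $A,B$ maximal, derives the one-step transition rules between adjacent maximal cells (removing a single element from a non-singleton coordinate and adding an element elsewhere, constrained by comparability), and inducts on a \emph{non-containment index} --- the number of positions where neither $A_k\subseteq B_k$ nor $B_k\subseteq A_k$. One shows, by an exhaustive but finite typology of how two triangle/rhombus types can sit relative to each other, that a single well-chosen transition on $A$ or $B$ either drops this index or reduces to an already-handled case. If you want to salvage a geometric argument, you would need to replace the straight segment by a path adapted to the $j$-th pseudohyperplane (the analogue of a tropical line), and that is essentially what the transition-rule analysis encodes.
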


\begin{proof}
Consider the collection of types given by taking all cells which contain at least one element from each of the $n$ copies of $\Delta_2$. 
We will show that this is the collection of types of a tropical oriented matroid, and the result will follow.

Most of the axioms are straightforward to verify for any subdivision of $\Delta_{n-1} \times \Delta_{d-1}$. The boundary axiom is easy, as for each $i\in [d]$, $\{(1, i), \ldots, (n, i)\}$ is a face: it is one of 
the $n$ copies of $\Delta_{d-1}$. If two cells $A,B$ of the triangulation violate incomparability, then their graphs $t_A$ and $t_B$ will overlap on a cycle.
The surrounding axiom is also easy; this just involves taking the face of a cell given by a 
linear functional whose coefficients are ordered as in the ordered partition.

The elimination axiom is the substantial one. 
We saw that, for tropical hyperplane arrangements, eliminating between types $A$ and $B$ at position $i$ amounts to walking along a tropical line from $A$ to $B$, and finding its largest intersection with hyperplane $i$. To mimic that proof, we need to understand how to walk around the triangulation.

One can understand this in at least two ways: using the Voronoi picture or Theorem \ref{prop:trees}. Let us first describe how to walk around the mixed subdivision corresponding to the given triangulation. The possible puzzle pieces in a mixed subdivision of the Minkowski sum of $n \Delta_2$ are 
$A=123$, $B = 13+23$, $C=12+23$, and $D=12+13$, as shown in Figure \ref{rhombi}. 

\begin{figure}[h]
\begin{center}\includegraphics[height=3cm]{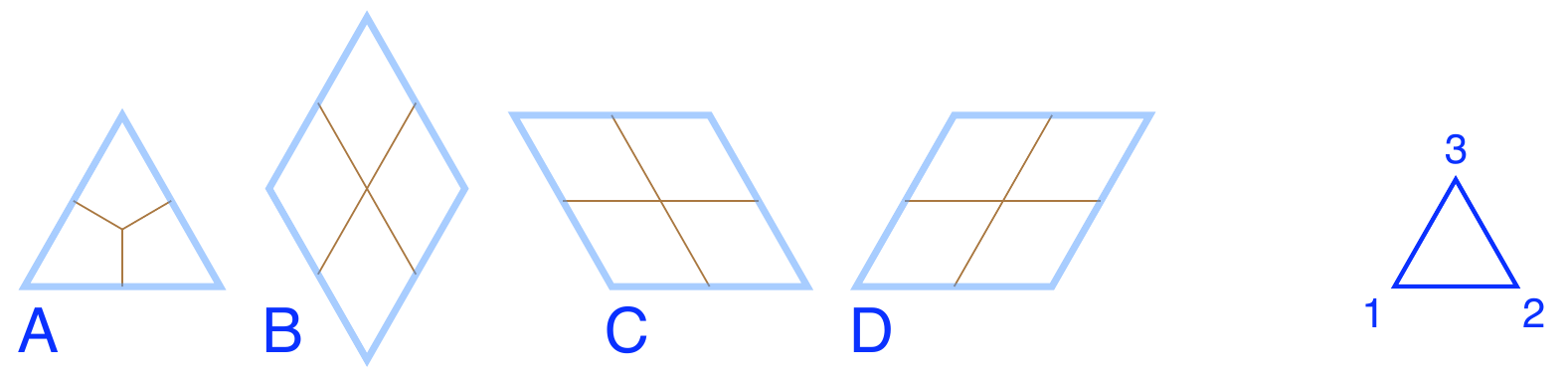}\end{center}
\caption{\label{rhombi} The puzzle pieces in a Voronoi picture corresponding to a mixed subdivision, and the triangle $\Delta_2$.}
\end{figure}

Armed with these Voronoi cells, we can say in great detail what the neighbors of a given vertex look like. For instance, consider a vertex coming 
from a triangle (a type $A$ vertex.) This vertex has type $(123, *, \ldots, *)$, where each $*$ is a singleton. If we exit the triangle on the 
bottom side along 
the tropical hyperplane, then we either encounter the 12-edge of $n\Delta_d$ (if and only if each $*$ is 1 or 2), or we run into puzzle piece $C$ or $D$. If the 
adjacent piece is $C$, then following the ray we are in, some index 3 in some other coordinate will be changed to 23 when we reach the vertex at the 
center of the puzzle piece. Similarly, if puzzle piece $D$ is adjacent to our $A$ piece along the 12-edge, then some index 3 will be changed to 13. So 
we have the following transition rule:
\[
(12{\bf 3}, 3) \rightarrow (12, 3x);
\]
\emph{i.e.}, whenever we have a type with 123 in it, and a 3 anywhere else, then we can find an adjacent vertex which loses the boldface ${\bf 3}$ and changes some 3 to either 31 or 32.  
(Note that this is only the case for one 3-singleton coordinate, and we do not know which one.)

\medskip

Let us now describe the same transition rule from the point of view of Theorem \ref{prop:trees}. If we are in a cell $C_i$ described by tree $t_i$ and wish to walk to a neighboring cell, we first choose the facet of $C_i$ that we wish to cross, which is described by the forest $t_i-e$ for some edge $e$ of $t_i$. If
 edge $e$ is a leaf of $t_i$, that facet is on the boundary of the triangle; if we cross it, we will walk out of the triangle. If, instead, $e$ is an internal edge, then we will cross to some cell $C_j$ which must be described by a tree of the form $(t_i-e) \cup f$. The edge $f$ must join the two connected components of $t_i-e$. This is precisely what condition (3) of Theorem \ref{prop:trees} describes.

Now imagine that we have a triangle with type $(123,*,\ldots,*)$ where each $*$ is a singleton, and we wish 
to exit the triangle along the bottom edge. This amounts to removing the $3$ from the first coordinate. In the corresponding tree, that disconnects vertex $3$ from vertices $1$ and $2$. To reconnect them, one must add an edge from another vertex, which is already connected to $3$, to one of the vertices $1$ and $2$. Once again, this transition is:
\[
(12{\bf 3}, 3) \rightarrow (12, 3x).
\]

We can use similar logic to obtain all of the transition rules. 
Here are the rules for type $A$ and type $C$ vertices (type $B$ and $D$ vertices are 
isomorphic to type $C$ vertices upon permutation of the numerals 1, 2, 3):

\begin{eqnarray*}
({\bf 1}23, 1) & \rightarrow & (23, 1x) \\
(1{\bf 2}3, 2) & \rightarrow & (13, 2x) \\
(12{\bf 3}, 3) & \rightarrow & (12, 3x) \\
\end{eqnarray*}
\begin{eqnarray*}
({\bf 1}2, 23, 1) & \rightarrow & (2, 23, 1x) \\
(1{\bf 2}, 23, x) & \rightarrow & (1, 23, x1) \textrm{ or } (1, 123, x)\\
(12, {\bf 2}3, x) & \rightarrow & (12, 3, x3) \textrm{ or } (123, 3, x)\\
(12, 2{\bf 3}, 3) & \rightarrow & (12, 2, 3x) \\
\end{eqnarray*}

So for a triangle, the generalized rule is that 
we remove some numeral from 123 and then add something to another appearance of that numeral (intuitively, ``moving away from 1" to change 
$({\bf 1}23, 1)$ to $(23, 1x)$). For a rhombus, we either remove a non-duplicated numeral and add something to another appearance of that numeral (``moving 
away from 3" to change $(12, 2{\bf 3}, 3)$ to $(12, 2, 3x)$), or remove a duplicated numeral and add the resulting singleton somewhere else (``moving 
towards 3" to change $(12, {\bf 2}3, x)$ to $(12, 3, x3)$ or $(123, 3, x)$.) All of this agrees with the intuition from ordinary tropical hyperplane 
arrangements, and just corresponds to changing the actual coordinates in the manner indicated (or, in the regular triangulations picture, modifying 
the coordinates of the face-defining hyperplane.) 

\medskip

Armed with these transition rules, which encode the ways one can move around a triangulation of $\Delta_{n-1} \times \Delta_2$, we are ready to prove the elimination axiom. 
Given two types $A = (A_1, \ldots, A_n)$ and $B = (B_1, \ldots, B_n)$, and a position $i$, we need to 
find a type which is equal to $A_i\cup B_i$ in the $i$-th coordinate and either $A_i, B_i$, or $A_i\cup B_i$ in every other coordinate. Since we 
have a triangulation, any subset of a type is a type, which means that we can reduce to the situation where both $A$ and $B$ are maximal among the 
purported collection of types (\emph{i.e.} are full-dimensional cells of the triangulation.) All such types either have one tripleton $123$ and all 
singletons otherwise, or have two non-identical doubletons and all singletons otherwise.

Our proof of this is by induction on the number of positions $j$ where $A_j$ and $B_j$ do not satisfy a containment relation; call this number the 
non-containment index of the pair $(A, B)$. Obviously if this 
number is zero, we can take whichever of $A$ and $B$ has larger $i$-th coordinate, and this will satisfy the requirements of the elimination axiom 
(since it is \textit{a fortiori} equal to either $A$ or $B$ on all other coordinates.) If this number is not zero, we will modify either $A$ or $B$ 
slightly to descend. In other words, suppose we modify $A$; we will produce a type $A'$ for which $A'$ and $B$ satisfy more containment relations than 
$A'$ and $B$, and for which each of $\{A'_j, B_j, A'_j\cup B_j\}$ contains one of $\{A_j, B_j, A_j\cup B_j\}$ for every $j$, with $A'_i\cup B_i 
\supseteq A_i\cup B_i$. We then eliminate between $A'$ and $B$ by the inductive hypothesis, and if necessary remove extra elements 
to satisfy the elimination axiom between $A$ and $B$.

We will carry out this plan by a case-by-case analysis of the possibilities for $A$ and $B$.
For each case, we will describe the types $A$ and $B$ in the top and bottom rows of a matrix, listing the coordinates of both 
types in the positions where either has a non-singleton; there are an arbitrary number of other coordinates $j$, each of which has both $A_j$ and $B_j$ 
as a singleton. We will list the possible pairs for these singletons as well; all other pairs will be ruled out due to incomparability. 
In each case we modify either $A$ and $B$ by using the transition rules to remove one numeral, indicated in boldface, from one of the coordinates. 

The following cases will cover all possibilities up to isomorphism. We will explain every case that involves a new idea; every other case is essentially identical to one of the ones preceding it.

\textbf{Case 1.} There is some position in which both types are doubletons. In this case, it is easy to check that there is exactly one such position.

\textit{Case 1a.} 
$\begin{pmatrix}
12 & 23 & 2 \\
12 & 3 & 2{\bf 3} 
\end{pmatrix}$
Possible singletons: 
$\begin{pmatrix}
1 & 2 & 3 & 1 & 2 \\
1 & 2 & 3 & 3 & 3
\end{pmatrix}$

If $i=3$, then we are already done. Otherwise, we can apply the rule to $B$ sending $(12, 2{\bf 3}, 3)$ to 
$(12, 2, 3x)$. This gets rid of the boldface {\bf 3}, 
and adds $x$ to some other singleton 3; the resulting type $B'$ is unchanged in every other position. If this is in position 2, it must end up as 23 
(otherwise $B'$ is incomparable with $A$), whereupon we must actually have produced $A$ and are done. Otherwise, the only possible cases are that it 
sends some other 3 in position $j>3$ to 13 (whereupon by comparing with $A$, the corresponding position of $A$ must have a 1 in it) or that it 
sends 3 to 23 (similarly, the corresponding position of $A$ must have a 2 in it in this case.) In either of these cases, another containment 
relation is created, as desired by our induction.

\textit{Case 1b.}
$\begin{pmatrix}
12 & 2{\bf 3} & 3 \\
12 & 2 & 13 
\end{pmatrix}$
Possible singletons: 
$\begin{pmatrix}
1 & 2 & 3 & 3 & 3 \\
1 & 2 & 3 & 1 & 2 
\end{pmatrix}$

\textit{Case 1c.}
$\begin{pmatrix}
12 & {\bf 2}3 & 2 \\
13 & 3 & 12 
\end{pmatrix}$
Possible singletons: 
$\begin{pmatrix}
1 & 2 & 3 & 1 & 2 & 2\\
1 & 2 & 3 & 3 & 1 & 3
\end{pmatrix}$

\textit{Case 1d.}
$\begin{pmatrix}
12 & {\bf 2}3 & 2 \\
13 & 3 & 23 
\end{pmatrix}$
Possible singletons: 
$\begin{pmatrix}
1 & 2 & 3 & 1 & 2 & 2\\
1 & 2 & 3 & 3 & 1 & 3
\end{pmatrix}$

\textit{Case 1e.}
$\begin{pmatrix}
12 & {\bf 2}3 & 1 \\
23 & 3 & 12 
\end{pmatrix}$
Possible singletons: 
$\begin{pmatrix}
1 & 2 & 3 & 1 & 1 & 2\\
1 & 2 & 3 & 2 & 3 & 3
\end{pmatrix}$

\smallskip

\textbf{Case 2.} Both types come from rhombi, and there is no overlap among the positions in which they have doubletons.

\textit{Case 2a.}
$\begin{pmatrix}
12 & 13 & 2 & 2\\
1 & 1 & {\bf 1}2 & 13 
\end{pmatrix}$
Possible singletons: 
$\begin{pmatrix}
1 & 2 & 3 & 2 & 2 & 3\\
1 & 2 & 3 & 1 & 3 & 1
\end{pmatrix}$

As before, if $i=3$, we are done. Otherwise, we remove the boldface {\bf 1} using the rule $({\bf 1}2,13,x) \rightarrow ((2,13,x2) \textrm{ or } (2,123,x))$, which changes some $x$ to $x2$ (or 13 to 123). If this is position 1, the non-containment 
index remains constant, but we have reduced to case 1 and thus are done. If not, the non-containment index decreases (by the same logic as 
before), and we are again done.

\textit{Case 2b.}
$\begin{pmatrix}
12 & 13 & 2 & 2\\
1 & 3 & {\bf 1}2 & 13 
\end{pmatrix}$
Possible singletons: 
$\begin{pmatrix}
1 & 2 & 3 & 1 & 2 & 2\\
1 & 2 & 3 & 3 & 1 & 3
\end{pmatrix}$

\smallskip

\textit{Case 2c.}
$\begin{pmatrix}
12 & 13 & 2 & 3\\
1 & 1 & 12 & 13 
\end{pmatrix}$
Possible singletons: 
$\begin{pmatrix}
1 & 2 & 3 & 1 & 1 & 2 & 3\\
1 & 2 & 3 & 2 & 3 & 3 & 2
\end{pmatrix}$

At this point we are not ready to deal with case 2c; we revisit it after case 2m.


\textit{Case 2d.}
$\begin{pmatrix}
12 & 13 & 3 & 3\\
1 & 1 & 12 & \textbf{2}3 
\end{pmatrix}$
Possible singletons: 
$\begin{pmatrix}
1 & 2 & 3 & 2 & 3 & 3\\
1 & 2 & 3 & 1 & 1 & 2
\end{pmatrix}$

\textit{Case 2e.}
$\begin{pmatrix}
12 & 13 & 2 & 2\\
1 & 1 & {\bf 1}2 & 23 
\end{pmatrix}$
Possible singletons: 
$\begin{pmatrix}
1 & 2 & 3 & 2 & 2 & 3\\
1 & 2 & 3 & 1 & 3 & 1
\end{pmatrix}$

Eliminating the boldface {\bf 1} changes some 1 to $1x$. If it changes position 1 to 12 or position 2 to 13, we reduce to case 1. It can't change position 
1 to 13 or position 2 to 12 because of incomparability. The only other case where the non-containment index does not decrease is if it changes 
$\begin{pmatrix}
2 \\
1
\end{pmatrix}$ to
$\begin{pmatrix}
2 \\
13
\end{pmatrix}$, but this is a reduction to a situation isomorphic to case 2d.

\textit{Case 2f.}
$\begin{pmatrix}
12 & 13 & 2 & 3\\
1 & 1 & {\bf 1}2 & 23 
\end{pmatrix}$
Possible singletons: 
$\begin{pmatrix}
1 & 2 & 3 & 2 & 3 & 3\\
1 & 2 & 3 & 1 & 1 & 2
\end{pmatrix}$


\textit{Case 2g.}
$\begin{pmatrix}
12 & 13 & 1 & 3\\
2 & 1 & 12 & {\bf 2}3 
\end{pmatrix}$
Possible singletons: 
$\begin{pmatrix}
1 & 2 & 3 & 1 & 3 & 3\\
1 & 2 & 3 & 2 & 1 & 2
\end{pmatrix}$

\textit{Case 2h.}
$\begin{pmatrix}
12 & 13 & 3 & 3\\
2 & 1 & 12 & {\bf 2}3 
\end{pmatrix}$
Possible singletons: 
$\begin{pmatrix}
1 & 2 & 3 & 1 & 3 & 3\\
1 & 2 & 3 & 2 & 1 & 2
\end{pmatrix}$

\textit{Case 2i.}
$\begin{pmatrix}
{\bf 1}2 & 13 & 1 & 1\\
2 & 2 & 12 & 23 
\end{pmatrix}$
Possible singletons: 
$\begin{pmatrix}
1 & 2 & 3 & 1 & 1 & 3\\
1 & 2 & 3 & 2 & 3 & 2
\end{pmatrix}$

\textit{Case 2j.}
$\begin{pmatrix}
{\bf 1}2 & 13 & 1 & 3\\
2 & 2 & 12 & 23 
\end{pmatrix}$
Possible singletons: 
$\begin{pmatrix}
1 & 2 & 3 & 1 & 1 & 3 & 3\\
1 & 2 & 3 & 2 & 3 & 1 & 2
\end{pmatrix}$

At most one of 
$\begin{pmatrix} 1\\3\end{pmatrix}$
and
$\begin{pmatrix} 3\\1\end{pmatrix}$
can occur, but this does not affect the argument: removing the boldface {\bf 1} adds a 2 somewhere, which must reduce the non-containment index.

\textit{Case 2k.}
$\begin{pmatrix}
{\bf 1}2 & 13 & 3 & 3\\
2 & 2 & 12 & 23 
\end{pmatrix}$
Possible singletons: 
$\begin{pmatrix}
1 & 2 & 3 & 1 & 3 & 3\\
1 & 2 & 3 & 2 & 1 & 2
\end{pmatrix}$

\textit{Case 2l.}
$\begin{pmatrix}
12 & 13 & 1 & 3\\
2 & 3 & 1{\bf 2} & 23 
\end{pmatrix}$
Possible singletons: 
$\begin{pmatrix}
1 & 2 & 3 & 1 & 1 & 3\\
1 & 2 & 3 & 2 & 3 & 2
\end{pmatrix}$

If the boldface {\bf 2}'s removal produced 123 in position 4, we would have a problem, but this is not possible due to comparability.

Now we are ready to deal with case 2c.

\textit{Case 2c.}
$\begin{pmatrix}
1\textbf{2} & 13 & 2 & 3\\
1 & 1 & 12 & 13 
\end{pmatrix}$
Possible singletons: 
$\begin{pmatrix}
1 & 2 & 3 & 2 & 3 & 2 & 3\\
1 & 2 & 3 & 1 & 1 & 3 & 2
\end{pmatrix}$

Removing the boldface {\bf 2} changes some 2 to $2x$. If this is in position 3, we are done by reduction to case 1. Otherwise, the only case in which this does not produce 
an extra containment relation is when we change 
$\begin{pmatrix} 2\\2\end{pmatrix}$ to
$\begin{pmatrix} 23\\2\end{pmatrix}$ or
$\begin{pmatrix} 2\\1\end{pmatrix}$ to
$\begin{pmatrix} 23\\1\end{pmatrix}$. The results are isomorphic to cases 2l and 2j, respectively, so we are done.

\smallskip

\textbf{Case 3.} At least one of the types comes from a triangle (type $A$) vertex.

We can assume that the matrix contains
$\begin{pmatrix}
{\bf 1}23 & 1 \\
\alpha & \beta  
\end{pmatrix},$
where $\alpha$ and $\beta$ are subsets of $\{1,2,3\}$ and $\beta$ is not a singleton. If $i=1$ we are done. Otherwise remove the boldface {\bf 1} from $A$, changing some 1 to $1x$. Notice that $\alpha$ cannot contain $1$ by comparability, so the type $A'$ obtained is such that each set in $\{A'_j, B_j, A'_j \cup B_j\}$ contains one of the sets in $\{A_j,B_j,A_j \cup B_j\}$ (for all $j \neq i$), and $A'_i \cup B_i \supseteq A_i \cup B_i$. This transformation might not reduce the non-containment index, but it makes the triangle $A$ into a rhombus $A'$. If $B$ is also a triangle, we can make it a rhombus in the same way, and then invoke cases 1 and 2.

\medskip

This completes the typology and thus the proof.
\end{proof}


\begin{thebibliography}{99} 
 



\bibitem{AB}
F. Ardila and S. Billey. Flag arrangements and triangulations of products of simplices. 
\emph{Advances in Mathematics}, to appear.

\bibitem{AK}
F. Ardila and C. Klivans. The Bergman complex of a matroid and phylogenetic trees. 
\emph{Journal of Combinatorial Theory, Series B} {\bf 96} (2006) 38-49. 

\bibitem{ARW}
F. Ardila, V. Reiner, and L. Williams. Bergman complexes, Coxeter arrangements, and graph associahedra. \emph{Seminaire Lotharingien de Combinatoire} {\bf 54A} (2006) Article B54Aj.

\bibitem{Babson}
E. Babson and L. Billera. The geometry of products of minors.
\emph{Discrete Comput. Geom.} {\bf 20} (1998) 231-249.

\bibitem{Bayer}
M. Bayer. Equidecomposable and weakly neighborly polytopes.
\emph{Israel J. Math.} {\bf 81} (1993) 301-320.

\bibitem{D}
M. Develin. The moduli space of $n$ tropically collinear points in $\RR^d$. \emph{Collectanea Mathematica} {\bf 56} (2005), 1-19.

\bibitem{DS} 
M. Develin and B. Sturmfels. Tropical convexity. \textit{Documenta 
Math.} {\bf 9} (2004), 1--27

\bibitem{FS}
E.M. Feichtner and B. Sturmfels. Matroid polytopes, nested sets and Bergman fans. \emph{Portugaliae Mathematica} (N.S.) {\bf 62} (2005) 437-468.

\bibitem{Gelfand}
I. M. Gelfand, M. Kapranov, and A. Zelevinsky. \emph{
Discriminants, resultants and multidimensional determinants},
Birkh\"auser, Boston, 1994.

\bibitem{Haiman}
M. Haiman. A simple and relatively efficient triangulation of the
$n$-cube. \emph{Discrete Comput. Geom.} {\bf 6} (1991) 287-289.

\bibitem{M}
G. Mikhalkin. Tropical geometry and its applications. Preprint, 2006. \texttt{math.AG/0601041}

\bibitem{Orden}
D. Orden and F. Santos. Asymptotically efficient triangulations of
the $d$-cube, in \emph{Discrete Comput. Geom.} {\bf 30} (2003)
509-528.

\bibitem{Postnikov}
A. Postnikov. Permutohedra, associahedra, and beyond. Preprint, 2005, \texttt{math.CO/0507163}. 

\bibitem{Pfeifle}
J. Pfeifle. Dissections, Hom-complexes and the Cayley trick. \emph{J. Combinatorial Theory, Ser. A}, to appear.

\bibitem{R}
J. Rambau. Triangulations of cyclic polytopes and higher Bruhat orders. \emph{Mathematika} {\bf  44} (1997) 162-194.


\bibitem{RST}
J. Richter-Gebert, B. Sturmfels, and T. Theobald. First Steps in tropical geometry. In "Idempotent Mathematics and Mathematical Physics", Proceedings Vienna 2003, (editors G.L. Litvinov and V.P. Maslov), Contemporary Mathematics {\bf 377}, American Mathematical Society (2005) 289-317.


\bibitem{Santos}
F. Santos, The Cayley Trick and triangulations of products of simplices. 
In Integer Points in Polyhedra -- Geometry, Number Theory, Algebra, Optimization (proceedings of the AMS-IMS-SIAM Summer Research Conference) edited by A. Barvinok, M. Beck, C. Haase, B. Reznick, and V. Welker, Contemporary Mathematics {\bf 374}, American Mathematical Society (2005) 151-177.


\bibitem{Santosflips}
F. Santos. A point configuration whose space of triangulations is
disconnected. \emph{J. Amer. Math. Soc.} {\bf 13} (2000) 611-637.

\bibitem{Santostoric}
F. Santos. Non-connected toric Hilbert schemes.
\emph{Mathematische Annalen} {\bf 332} (2005) 645-665.

\bibitem{Santostriangs}
F. Santos. Triangulations of oriented matroids. \emph{Memoirs of the American Mathematical Society} {\bf 156} (2002) No. 741.

\bibitem{Seashore}
K. Seashore. Growth series of root lattices. Master's thesis, San Francisco State University, 2007.

\bibitem{SS}
D. Speyer and B. Sturmfels. Tropical mathematics. Clay Mathematics Institute Senior Scholar Lecture given at Park City, Utah, July 2004, \texttt{math.CO/0408099}.

\bibitem{Sturmfels}
B. Sturmfels. Gr\"obner Bases and Convex Polytopes, American Mathematical Society, Univ. Lectures Series, No 8, Providence, Rhode Island, 1996. 

\bibitem{Ziegler}
G. Ziegler. Lectures on Polytopes. Graduate Texts in Mathematics {\bf 152} Springer-Verlag, New York 1995.

\end{thebibliography}
\end{document}